\theoremstyle{plain}
\newcommand*{\E}{\ensuremath{\mathbb{S}\text{ets}}}
\newcommand*{\CAT}{\ensuremath{\mathbb{C}\text{AT}}}
\newcommand*{\Cat}{\ensuremath{\mathbb{C}\text{at}}}
\newcommand*{\Glob}{\operatorname{\mathbb{G}\mathrm{lob}}}
\title{Globular weak $(n,\infty)$-Transformations ($n\in\mathbb{N}$) in the sense of Grothendieck}
\author{Camell
  Kachour}
\begin{document}
\maketitle
\vspace*{3.5cm}
\begin{abstract}
This article describe globular weak $(n,\infty)$-transformations ($n\in\mathbb{N}$) in the sense of Grothendieck, i.e for each
$n\in\mathbb{N}$ we build a coherator $\Theta^{\infty}_{\mathbb{M}^n}$ which sets models are globular weak $(n,\infty)$-transformations.
A natural globular filtration emerges from these coherators. 
\end{abstract}

\begin{minipage}{118mm}{\small
    {\bf Keywords.} higher stacks, higher geometry, higher logic.\\
    {\bf Mathematics Subject Classification (2010).} 18B40,18C15, 18C20, 18G55,
    20L99, 55U35, 55P15.  }\end{minipage}

\hypersetup{%
  linkcolor=blue}%
\tableofcontents
\vspace*{1cm}

\vspace*{1cm}

\section*{Introduction}   

We start this article by defining, for each $n\in\mathbb{N}^*$ a coherator $\Theta^{\infty}_{\mathbb{M}^n}$ in the sense of Grothendieck
(see \cite{grothendieck,malts-cat1,malts-cat,malts-gr}), which is a specific sketch \cite{lair-coppey,makkai-pare} such that models of it are 
globular weak $(n,\infty)$-transformations. 

Recently John Bourke \cite{bourke-injectif} proved a conjecture of Dimitri Ara \cite{ara-these}, which shows
that globular weak $\infty$-categories in the sense of Batanin \cite{batanin-main} and globular weak $\infty$-categories in the sense 
of Grothendieck \cite{grothendieck,malts-cat} are equivalent: more precisely globular weak $\infty$-categories in the sense of Batanin are
$\mathbb{B}^0_C$-algebras where $\mathbb{B}^0_C$ is a specific globular operad, and globular weak 
$\infty$-categories in the sense of Grothendieck are models of a specific theory $\Theta^{\infty}_{\mathbb{M}^0}$; it is 
proved in \cite{clemens,mark-nerve} that the monad associated to any globular operads is strongly cartesian \cite{mark-nerve} thus
for the operad $\mathbb{B}^0_C$ it
leads to a nice theory\footnote{By \textit{theory} here we mean a small category equipped with a chosen
set of projective cones \cite{lair-coppey,makkai-pare}.} $\Theta_{\mathbb{B}^0_C}$ which is Morita equivalent to $\Theta^{\infty}_{\mathbb{M}^0}$
\cite{bourke-injectif,ara-these} in the sense that their category of models are equivalent. 

Thus we suspect that models of the coherators $\Theta^{\infty}_{\mathbb{M}^n}$ ($n\in\mathbb{N}^*$) build in this article 
should be also weak $(n,\infty)$-transformations in the sense of \cite{cam-cgasa3,cam-phd} where they are defined as
$\mathbb{B}^n_C$-algebras ($n\in\mathbb{N}^*$) for specific globular operads $\mathbb{B}^n_C$: similarly to
\cite{bourke-injectif}, for each integers $n\geq 1$, the theory $\Theta^{\infty}_{\mathbb{M}^n}$ and the theory
$\Theta_{\mathbb{B}^n_C}$ should be Morita equivalent. 

Finally we show that $\Theta^{\infty}_{\mathbb{M}^2}$-models in $\E$ of dimension $2$, i.e globular weak $(2,\infty)$-natural 
transformations of dimension $2$, are pseudo-$2$-natural transformations. 

The combinatorics developed in this article lead to an accurate description of the monad $\mathbb{T}^n$ (for each $n\in\mathbb{N}^*$) defined 
on the cartesian product of the 
category of globular sets with itself, which algebras are globular strict $(n,\infty)$-transformations;
this is similar to the description of the monad $\mathbb{S}$ on the category of precubical sets as described in \cite{cam-cubic} which algebras are
cubical strict $\infty$-categories with connections. We shall
give more precisions in a later version of this article. In particular it will show that these monads $\mathbb{T}^n$ (for each $n\in\mathbb{N}^*$)
are cartesian as conjectured in \cite{cam-cgasa3}. We shall also use the formalism developed in \cite{bourke-garner} to show that the coherators $\Theta^{\infty}_{\mathbb{M}^n}$ ($n\in\mathbb{N}^*$) lead to monads on the cartesian product of the category of globular sets with itself, which algebras correspond to these globular weak $(n,\infty)$-transformations in the sense of Grothendieck. 

{\bf Acknowledgement.}
In the memory of Alexandre Grothendieck.

\section{Globular theories $\Theta_{\mathbb{M}^n}$ of $(n,\infty)$-magmas ($n\in\mathbb{N}$).}
\label{1}

\subsection{The globular $\mathbb{G}_n$-extensions $\Theta_n$ ($n\in\mathbb{N}$) and their filtration.}
\label{2}

The $n$-\textit{globe categories} $\mathbb{G}_n$ ($n\in\mathbb{N}$) are small categories which objects are
the basic arities (or atomic arities) of operations behind the structure of globular higher transformations. Their
accurate descriptions (see below) is of first importance for theoretical reasons : their 
\textit{globular completions} $\Theta_n$ ($n\in\mathbb{N}$) defined just below, have as objects the arities of operations behind the 
structure of globular higher transformations, and the important filtration $\Theta_{\bullet}$ described below 
is easily obtain from the filtration $\mathbb{G}_{\bullet}$ below, and the universal property of the completion
$\Theta_n$ of each $\mathbb{G}_{n}$.

The $0$-\textit{globe category} is a small category $\mathbb{G}_0$ which objects are formal symbols
$1(n)$ ($n\in\mathbb{N}$) which arrows are generate by maps $s^{n}_{n-1}, t^{n}_{n-1}$ 

$$\begin{tikzcd}
 1(0)
 \arrow[rr, yshift=1.5ex,"s^{1}_{0}"]\arrow[rr, yshift=-1.5ex,"t^{1}_{0}"{below}]
 &&1(1)
\arrow[rr, yshift=1.5ex,"s^{2}_{1}"]\arrow[rr, yshift=-1.5ex,"t^{2}_{1}"{below}]  
&& 1(2) 
\arrow[rr, yshift=1.5ex,"s^{3}_{2}"]\arrow[rr, yshift=-1.5ex,"t^{3}_{2}"{below}]
&&1(3) 
\arrow[rr, yshift=1.5ex,"s^{4}_{3}"]\arrow[rr, yshift=-1.5ex,"t^{4}_{3}"{below}]
&&1(4)\cdots 1(n-1)  
\arrow[rr, yshift=1.5ex,"s^{n}_{n-1}"]\arrow[rr, yshift=-1.5ex,"t^{n}_{n-1}"{below}]
&& 1(n)\cdots    
\end{tikzcd}$$
which follows the \textit{globular equations}
$s^{n}_{n-1}s^{n-1}_{n-2}=t^{n}_{n-1}s^{n-1}_{n-2}$ and $t^{n}_{n-1}t^{n-1}_{n-2}=s^{n}_{n-1}t^{n-1}_{n-2}$
for all integers $n\geq 2$. The maps $s^{n}_{n-1}$ are called \textit{cosourses} and
the maps $t^{n}_{n-1}$ are called \textit{cotargets}. The composition
$s^{n}_{n-1}s^{n-1}_{n-2}\cdots s^{p+1}_{p}$ is denoted $s^{n}_{p}$ and the composition
$t^{n}_{n-1}t^{n-1}_{n-2}\cdots t^{p+1}_{p}$ is denoted $t^{n}_{p}$.

\begin{definition}
A $\mathbb{G}_0$-presheaf i.e an object of $[\mathbb{G};\E]$ is called a \textit{globular filtration}, whereas a 
$\mathbb{G}_0^{\text{op}}$-presheaf i.e an object of $[\mathbb{G}^{\text{op}};\E]$ is called a \textit{globular set}.
\end{definition}

Let $\mathbb{S}^0$ the monad on $[\mathbb{G}_0^{\text{op}};\E]$ of globular strict $\infty$-categories. The category
$\big(1\downarrow\mathbb{S}^0(1)\big)$ of elements of the presheaf $\mathbb{S}^0(1)$ is commonly denoted
$\Theta_0$, and plays an important role for describing models for theories for globular higher structure (for
example the theory $\Theta_{\text{S}^0}$ which set-models are globular strict $\infty$-categories, or the theory $\Theta_{\text{W}^0}$
which set-models are globular weak $\infty$-categories, etc.), those based on monads on $[\mathbb{G}_0^{\text{op}};\E]$. 
Objects of $\Theta_0$ are globular trees in the sense of \cite{batanin-main}
and they play the role of arities for operations inside such theories. Actually we can avoid the use of the monad 
$\mathbb{S}^0$ to build $\Theta_0$ by using specific colimits based on the globe category $\mathbb{G}_0$:
first we consider tables $t$ of non-negative integers:

$$\begin{pmatrix} 
i_1&&&&i_2&&&&i_3&\cdot&\cdot&\cdot&i_{k-1}&&&&i_{k}\\\\
&&i'_1&&&&i'_2&&\cdot&\cdot&\cdot&\cdot&\cdot&&i'_{k-1}
\end{pmatrix}$$

where $k\geq 1$, $i_l>i'_l<i_{l+1}$ and $1\leq l\leq k-1$. 

Let $\mathcal{C}$ a category and let 
\begin{tikzcd}
\mathbb{G}_0\arrow[rr,"F"]&&\mathcal{C}
\end{tikzcd}
a functor. We denote $F(1(n))=D^n$ and we shall keep the same 
notations for the image of cosources : $F(s^{i_l}_{i_{l'}})=s^{i_l}_{i_{l'}}$, 
and for the image of cotargets : $F(t^{i_l}_{i_{l'}})=t^{i_l}_{i_{l'}}$, because
no risk of confusion will occur. In this case 
\begin{tikzcd}
\mathbb{G}_0\arrow[rr,"F"]&&\mathcal{C}
\end{tikzcd}
is called a globular $\mathbb{G}_0$-extension if for all tables $t$ as just above, the
colimit of the following diagram:

$$\begin{tikzcd}
D^{i_1}&&D^{i_2}&&D^{i_3}&&\cdots&&D^{i_{k-1}}&&D^{i_k}\\
&D^{i'_1}\arrow[lu,"t^{i_1}_{i'_1}"]\arrow[ru,"s^{i_2}_{i'_1}"]
&&D^{i'_2}\arrow[lu,"t^{i_2}_{i'_2}"]\arrow[ru,"s^{i_3}_{i'_2}"]&&
D^{i'_3}\arrow[lu]&\cdots&
D^{i'_{k-2}}\arrow[ru]&&
D^{i'_{k-1}}\arrow[lu,"t^{i_{k-1}}_{i'_{k-1}}"]\arrow[ru,"s^{i_k}_{i'_{k-1}}"]
\end{tikzcd}$$

exists in $\mathcal{C}$.
In \cite{grothendieck} Alexander Grothendieck calls these colimits \textit{globular sums}. We 
prefer call these colimits \textit{globular $\mathbb{G}_0$-sums}.
A morphism of globular $\mathbb{G}_0$-extensions, also called \textit{globular $\mathbb{G}_0$-functor}, is 
given by a commutative triangle in $\CAT$:

$$\begin{tikzcd}
&&\mathcal{C}\arrow[dd,"H"]\\
\mathbb{G}_0\arrow[rru,"F"]\arrow[rrd,"F'"{below}]\\
&&\mathcal{C}'
\end{tikzcd}$$

such that the functor $H$ preserves
globular $\mathbb{G}_0$-sums. The category of globular $\mathbb{G}_0$-extensions
is denoted $\mathbb{G}_0\text{-}\mathbb{E}\text{xt}$. An initial object
of $\mathbb{G}_0\text{-}\mathbb{E}\text{xt}$: 
\begin{tikzcd}
\mathbb{G}_0\arrow[rr,"i"]&&\Theta_0
\end{tikzcd}, gives the small category $\Theta_0$. Tables as above are
another formulation of trees in the sense of \cite{batanin-main}, thus
such tables are preferably called \textit{globular $\mathbb{G}_0$-trees}.
This small category $\Theta_0$ can also be described as the full subcategory
of $\mathbb{G}\text{lob}$ which objects are globular $\mathbb{G}_0$-trees.

Now we are going to extend this $\Theta_0$ for globular strict $\infty$-functors: this
analogue shall be denoted $\Theta_1$. But also globular strict 
$\infty$-natural transformations have their own "$\Theta$" that we denote
$\Theta_2$. Let us put some terminology: globular strict $(0,\infty)$-transformations
are globular strict $\infty$-categories; globular strict $(1,\infty)$-transformations
are globular strict $\infty$-functors; globular strict $(2,\infty)$-transformations
are globular strict $\infty$-natural transformations; globular strict $(3,\infty)$-transformations
are globular strict $\infty$-modifications, and so on. For all integers $n\in\mathbb{N}$ 
the definition of globular strict $(n,\infty)$-transformations is in \cite{cam-cgasa3}.

Let us first describe the small categories $\mathbb{G}_1$, $\mathbb{G}_2$ and $\mathbb{G}_3$ which 
shall be used to get respectively $\Theta_1$, $\Theta_2$ and $\Theta_3$:

The $1$-\textit{globe category} is a small category $\mathbb{G}_1$ which objects are formal symbols
$1(n)$, $f^n(1(n))$, $2(n)$ ($n\in\mathbb{N}$), which arrows are generate by maps $s^{n}_{n-1}, t^{n}_{n-1}$ 
which follows the globular equations:
$s^{n}_{n-1}s^{n-1}_{n-2}=t^{n}_{n-1}s^{n-1}_{n-2}$ and $t^{n}_{n-1}t^{n-1}_{n-2}=s^{n}_{n-1}t^{n-1}_{n-2}$
for all integers $n\geq 2$:

$$\begin{tikzcd}
 1(0)
 \arrow[rr, yshift=1.5ex,"s^{1}_{0}"]\arrow[rr, yshift=-1.5ex,"t^{1}_{0}"{below}]
 &&1(1)
\arrow[rr, yshift=1.5ex,"s^{2}_{1}"]\arrow[rr, yshift=-1.5ex,"t^{2}_{1}"{below}]  
&& 1(2) 
\arrow[rr, yshift=1.5ex,"s^{3}_{2}"]\arrow[rr, yshift=-1.5ex,"t^{3}_{2}"{below}]
&&1(3) 
\arrow[rr, yshift=1.5ex,"s^{4}_{3}"]\arrow[rr, yshift=-1.5ex,"t^{4}_{3}"{below}]
&&1(4)\cdots 1(n-1)  
\arrow[rr, yshift=1.5ex,"s^{n}_{n-1}"]\arrow[rr, yshift=-1.5ex,"t^{n}_{n-1}"{below}]
&& 1(n)\cdots    
\end{tikzcd}$$

$$\begin{tikzcd}
 f^{0}(1(0))
 \arrow[r, yshift=1.5ex,"s^{1}_{0}"]\arrow[r, yshift=-1.5ex,"t^{1}_{0}"{below}]
 &f^{1}(1(1))
\arrow[r, yshift=1.5ex,"s^{2}_{1}"]\arrow[r, yshift=-1.5ex,"t^{2}_{1}"{below}]  
& f^{2}(1(2)) 
\arrow[r, yshift=1.5ex,"s^{3}_{2}"]\arrow[r, yshift=-1.5ex,"t^{3}_{2}"{below}]
&f^{3}(1(3)) 
\arrow[r, yshift=1.5ex,"s^{4}_{3}"]\arrow[r, yshift=-1.5ex,"t^{4}_{3}"{below}]
&f^{4}(1(4))\cdots f^{n-1}(1(n-1))  
\arrow[r, yshift=1.5ex,"s^{n}_{n-1}"]\arrow[r, yshift=-1.5ex,"t^{n}_{n-1}"{below}]
& f^{n}(1(n))\cdots    
\end{tikzcd}$$
$$\begin{tikzcd}
 2(0)
 \arrow[rr, yshift=1.5ex,"s^{1}_{0}"]\arrow[rr, yshift=-1.5ex,"t^{1}_{0}"{below}]
 &&2(1)
\arrow[rr, yshift=1.5ex,"s^{2}_{1}"]\arrow[rr, yshift=-1.5ex,"t^{2}_{1}"{below}]  
&& 2(2) 
\arrow[rr, yshift=1.5ex,"s^{3}_{2}"]\arrow[rr, yshift=-1.5ex,"t^{3}_{2}"{below}]
&&2(3) 
\arrow[rr, yshift=1.5ex,"s^{4}_{3}"]\arrow[rr, yshift=-1.5ex,"t^{4}_{3}"{below}]
&&2(4)\cdots 2(n-1)  
\arrow[rr, yshift=1.5ex,"s^{n}_{n-1}"]\arrow[rr, yshift=-1.5ex,"t^{n}_{n-1}"{below}]
&& 2(n)\cdots    
\end{tikzcd}$$

which follow the globular equations as just above.

We can see that $\mathbb{G}_1$ has three subcategories denoted respectively by $\mathbb{G}_{1,0}$, 
$\mathbb{G}_{1,t}$ and $\mathbb{G}_{1,1}$, and which are connected. Also 
we deliberately use the same notations for the cosources and cotargets maps of these three
subcategories because no confusions will occur. The notations that we used for objects of $\mathbb{G}_1$ shall be suggestive
when we will give an accurate description of the monad $\mathbb{S}^1$ of globular strict $\infty$-functors.

The $2$-\textit{globe category} is a small category $\mathbb{G}_2$ which objects are formal symbols
$1(n)$, $f^n(1(n))$, $\xi_1$, $g^n(1(n))$, $2(n)$ ($n\in\mathbb{N}$), which arrows are generate by maps $s^{n}_{n-1}, t^{n}_{n-1}$ 
which follows the globular equations:
$s^{n}_{n-1}s^{n-1}_{n-2}=t^{n}_{n-1}s^{n-1}_{n-2}$ and $t^{n}_{n-1}t^{n-1}_{n-2}=s^{n}_{n-1}t^{n-1}_{n-2}$
for all integers $n\geq 2$:

$$\begin{tikzcd}
 1(0)
 \arrow[rr, yshift=1.5ex,"s^{1}_{0}"]\arrow[rr, yshift=-1.5ex,"t^{1}_{0}"{below}]
 &&1(1)
\arrow[rr, yshift=1.5ex,"s^{2}_{1}"]\arrow[rr, yshift=-1.5ex,"t^{2}_{1}"{below}]  
&& 1(2) 
\arrow[rr, yshift=1.5ex,"s^{3}_{2}"]\arrow[rr, yshift=-1.5ex,"t^{3}_{2}"{below}]
&&1(3) 
\arrow[rr, yshift=1.5ex,"s^{4}_{3}"]\arrow[rr, yshift=-1.5ex,"t^{4}_{3}"{below}]
&&1(4)\cdots 1(n-1)  
\arrow[rr, yshift=1.5ex,"s^{n}_{n-1}"]\arrow[rr, yshift=-1.5ex,"t^{n}_{n-1}"{below}]
&& 1(n)\cdots    
\end{tikzcd}$$

$$\begin{tikzcd}
 f^{0}(1(0))\arrow[rd,"s^{1}_{0}", near end]
 \arrow[r, yshift=1.5ex,"s^{1}_{0}"]\arrow[r, yshift=-1.5ex,"t^{1}_{0}"]
 &f^{1}(1(1))
\arrow[r, yshift=1.5ex,"s^{2}_{1}"]\arrow[r, yshift=-1.5ex,"t^{2}_{1}"{below}]  
& f^{2}(1(2)) 
\arrow[r, yshift=1.5ex,"s^{3}_{2}"]\arrow[r, yshift=-1.5ex,"t^{3}_{2}"{below}]
&f^{3}(1(3)) 
\arrow[r, yshift=1.5ex,"s^{4}_{3}"]\arrow[r, yshift=-1.5ex,"t^{4}_{3}"{below}]
&f^{4}(1(4))\cdots f^{n-1}(1(n-1))  
\arrow[r, yshift=1.5ex,"s^{n}_{n-1}"]\arrow[r, yshift=-1.5ex,"t^{n}_{n-1}"{below}]
& f^{n}(1(n))\cdots\\ 
&\xi_1\\  
g^{0}(1(0))\arrow[ru,"t^{1}_{0}"{below},near end]
 \arrow[r, yshift=1.5ex,"s^{1}_{0}"{below}]\arrow[r, yshift=-1.5ex,"t^{1}_{0}"{below}]
 &g^{1}(1(1))
\arrow[r, yshift=1.5ex,"s^{2}_{1}"]\arrow[r, yshift=-1.5ex,"t^{2}_{1}"{below}]  
& g^{2}(1(2)) 
\arrow[r, yshift=1.5ex,"s^{3}_{2}"]\arrow[r, yshift=-1.5ex,"t^{3}_{2}"{below}]
&g^{3}(1(3)) 
\arrow[r, yshift=1.5ex,"s^{4}_{3}"]\arrow[r, yshift=-1.5ex,"t^{4}_{3}"{below}]
&g^{4}(1(4))\cdots 
g^{n-1}(1(n-1))  
\arrow[r, yshift=1.5ex,"s^{n}_{n-1}"]\arrow[r, yshift=-1.5ex,"t^{n}_{n-1}"{below}]
& g^{n}(1(n))\cdots  
 \end{tikzcd}$$
 
 $$\begin{tikzcd}
 2(0)
 \arrow[rr, yshift=1.5ex,"s^{1}_{0}"]\arrow[rr, yshift=-1.5ex,"t^{1}_{0}"{below}]
 &&2(1)
\arrow[rr, yshift=1.5ex,"s^{2}_{1}"]\arrow[rr, yshift=-1.5ex,"t^{2}_{1}"{below}]  
&& 2(2) 
\arrow[rr, yshift=1.5ex,"s^{3}_{2}"]\arrow[rr, yshift=-1.5ex,"t^{3}_{2}"{below}]
&&2(3) 
\arrow[rr, yshift=1.5ex,"s^{4}_{3}"]\arrow[rr, yshift=-1.5ex,"t^{4}_{3}"{below}]
&&2(4)\cdots 2(n-1)  
\arrow[rr, yshift=1.5ex,"s^{n}_{n-1}"]\arrow[rr, yshift=-1.5ex,"t^{n}_{n-1}"{below}]
&& 2(n)\cdots    
\end{tikzcd}$$

The $3$-\textit{globe category} is a small category $\mathbb{G}_3$ which objects are formal symbols
$1(n)$, $f^n(1(n))$, $\alpha_1(1(0))$, $\xi_2(1(0))$, $\beta_1(1(0))$, $g^n(1(n))$, $2(n)$ ($n\in\mathbb{N}$), which arrows are 
generate by maps $s^{n}_{n-1}, t^{n}_{n-1}$ which follows the globular equations:
$s^{n}_{n-1}s^{n-1}_{n-2}=t^{n}_{n-1}s^{n-1}_{n-2}$ and $t^{n}_{n-1}t^{n-1}_{n-2}=s^{n}_{n-1}t^{n-1}_{n-2}$
for all integers $n\geq 2$:
 
 $$\begin{tikzcd}
 1(0)
 \arrow[rr, yshift=1.5ex,"s^{1}_{0}"]\arrow[rr, yshift=-1.5ex,"t^{1}_{0}"{below}]
 &&1(1)
\arrow[rr, yshift=1.5ex,"s^{2}_{1}"]\arrow[rr, yshift=-1.5ex,"t^{2}_{1}"{below}]  
&& 1(2) 
\arrow[rr, yshift=1.5ex,"s^{3}_{2}"]\arrow[rr, yshift=-1.5ex,"t^{3}_{2}"{below}]
&&1(3) 
\arrow[rr, yshift=1.5ex,"s^{4}_{3}"]\arrow[rr, yshift=-1.5ex,"t^{4}_{3}"{below}]
&&1(4)\cdots 1(n-1)  
\arrow[rr, yshift=1.5ex,"s^{n}_{n-1}"]\arrow[rr, yshift=-1.5ex,"t^{n}_{n-1}"{below}]
&& 1(n)\cdots    
\end{tikzcd}$$
 
  $$\begin{tikzcd}
 f^{0}(1(0))\arrow[rddd,"t^{1}_{0}"{left}, near start]\arrow[rd,"s^{1}_{0}"{below},near end]
 \arrow[r, yshift=1.5ex,"s^{1}_{0}"]\arrow[r, yshift=-1.5ex,"t^{1}_{0}"]
 &f^{1}(1(1))
\arrow[r, yshift=1.5ex,"s^{2}_{1}"]\arrow[r, yshift=-1.5ex,"t^{2}_{1}"{below}]  
& f^{2}(1(2)) 
\arrow[r, yshift=1.5ex,"s^{3}_{2}"]\arrow[r, yshift=-1.5ex,"t^{3}_{2}"{below}]
&f^{3}(1(3)) 
\arrow[r, yshift=1.5ex,"s^{4}_{3}"]\arrow[r, yshift=-1.5ex,"t^{4}_{3}"{below}]
&f^{4}(1(4))\cdots f^{n-1}(1(n-1))  
\arrow[r, yshift=1.5ex,"s^{n}_{n-1}"]\arrow[r, yshift=-1.5ex,"t^{n}_{n-1}"{below}]
& f^{n}(1(n))\cdots\\ 
&\alpha_1(1(0))\arrow[rd,"s^{2}_{1}"]\\
&&\xi_2(1(0))\\
&\beta_1(1(0))\arrow[ru,"t^{2}_{1}"{below}]\\
 g^{0}(1(0))\arrow[ruuu,"s^{1}_{0}"{left}, near start]\arrow[ru,"t^{1}_{0}", near end]
 \arrow[r, yshift=1.5ex,"s^{1}_{0}"{below}]\arrow[r, yshift=-1.5ex,"t^{1}_{0}"{below}]
 &g^{1}(1(1))
\arrow[r, yshift=1.5ex,"s^{2}_{1}"]\arrow[r, yshift=-1.5ex,"t^{2}_{1}"{below}]  
& g^{2}(1(2)) 
\arrow[r, yshift=1.5ex,"s^{3}_{2}"]\arrow[r, yshift=-1.5ex,"t^{3}_{2}"{below}]
&g^{3}(1(3)) 
\arrow[r, yshift=1.5ex,"s^{4}_{3}"]\arrow[r, yshift=-1.5ex,"t^{4}_{3}"{below}]
&g^{4}(1(4))\cdots 
g^{n-1}(1(n-1))  
\arrow[r, yshift=1.5ex,"s^{n}_{n-1}"]\arrow[r, yshift=-1.5ex,"t^{n}_{n-1}"{below}]
& g^{n}(1(n))\cdots     
\end{tikzcd}$$

$$\begin{tikzcd}
 2(0)
 \arrow[rr, yshift=1.5ex,"s^{1}_{0}"]\arrow[rr, yshift=-1.5ex,"t^{1}_{0}"{below}]
 &&2(1)
\arrow[rr, yshift=1.5ex,"s^{2}_{1}"]\arrow[rr, yshift=-1.5ex,"t^{2}_{1}"{below}]  
&& 2(2) 
\arrow[rr, yshift=1.5ex,"s^{3}_{2}"]\arrow[rr, yshift=-1.5ex,"t^{3}_{2}"{below}]
&&2(3) 
\arrow[rr, yshift=1.5ex,"s^{4}_{3}"]\arrow[rr, yshift=-1.5ex,"t^{4}_{3}"{below}]
&&2(4)\cdots 2(n-1)  
\arrow[rr, yshift=1.5ex,"s^{n}_{n-1}"]\arrow[rr, yshift=-1.5ex,"t^{n}_{n-1}"{below}]
&& 2(n)\cdots    
\end{tikzcd}$$

Let $\mathbb{S}^n$ the monad on $[\mathbb{G}_0^{\text{op}}+\mathbb{G}_0^{\text{op}};\E]$ of globular strict 
$(n,\infty)$-transformations. The category $\big(1+2\downarrow\mathbb{S}^n(1+2)\big)$ of elements of the 
presheaf $\mathbb{S}^n(1+2)$ is denoted $\Theta_n$, and plays an important role for describing models for 
theories for globular higher structures like the theory $\Theta_{\text{S}^n}$ which set-models are globular strict 
$(n,\infty)$-transformations, or the theory $\Theta_{\text{W}^n}$ which set-models are globular weak $(n,\infty)$-transformations 
defined in \cite{cam-cgasa3}. Now it is interesting to follow the steps of Grothendieck and to construct $\Theta_n$ 
with adapted colimits, avoiding the use of the monad $\mathbb{S}^n$. 
For that we start with the following small category $\mathbb{G}_{n}$ (for $n\geq 4$; $\mathbb{G}_{1}$, $\mathbb{G}_{2}$, $\mathbb{G}_{3}$ 
were described above), that we call the $n$-\textit{globe category}: it is a small category which objects are formal symbols
$1(n)$, $f^n(1(n))$, $\alpha_{l}(1(0))$, $\xi_n(1(0))$, $\beta_{l}(1(0))$, $g^n(1(n))$, $2(n)$ 
($n\in\mathbb{N}$, and $1\leq l\leq n-1$ if $n>1$), which arrows are generate by maps $s^{n}_{n-1}, t^{n}_{n-1}$ 
which follows the globular equations:
$s^{n}_{n-1}s^{n-1}_{n-2}=t^{n}_{n-1}s^{n-1}_{n-2}$ and $t^{n}_{n-1}t^{n-1}_{n-2}=s^{n}_{n-1}t^{n-1}_{n-2}$
for all integers $n\geq 2$:

$$\begin{tikzcd}
 1(0)
 \arrow[rr, yshift=1.5ex,"s^{1}_{0}"]\arrow[rr, yshift=-1.5ex,"t^{1}_{0}"{below}]
 &&1(1)
\arrow[rr, yshift=1.5ex,"s^{2}_{1}"]\arrow[rr, yshift=-1.5ex,"t^{2}_{1}"{below}]  
&& 1(2) 
\arrow[rr, yshift=1.5ex,"s^{3}_{2}"]\arrow[rr, yshift=-1.5ex,"t^{3}_{2}"{below}]
&&1(3) 
\arrow[rr, yshift=1.5ex,"s^{4}_{3}"]\arrow[rr, yshift=-1.5ex,"t^{4}_{3}"{below}]
&&1(4)\cdots 1(n-1)  
\arrow[rr, yshift=1.5ex,"s^{n}_{n-1}"]\arrow[rr, yshift=-1.5ex,"t^{n}_{n-1}"{below}]
&& 1(n)\cdots    
\end{tikzcd}$$
 
$$\begin{tikzcd}
 f^{0}(1(0))\arrow[rddd,"t^{1}_{0}"{left}, near start]\arrow[rd,"s^{1}_{0}"{below}]
 \arrow[r, yshift=1.5ex,"s^{1}_{0}"]\arrow[r, yshift=-1.5ex,"t^{1}_{0}"]
 &f^{1}(1(1))
\arrow[r, yshift=1.5ex,"s^{2}_{1}"]\arrow[r, yshift=-1.5ex,"t^{2}_{1}"{below}]  
& f^{2}(1(2)) 
\arrow[r, yshift=1.5ex,"s^{3}_{2}"]\arrow[r, yshift=-1.5ex,"t^{3}_{2}"{below}]
&f^{3}(1(3)) 
\arrow[r, yshift=1.5ex,"s^{4}_{3}"]\arrow[r, yshift=-1.5ex,"t^{4}_{3}"{below}]
&f^{4}(1(4))\cdots f^{n-1}(1(n-1))  
\arrow[r, yshift=1.5ex,"s^{n}_{n-1}"]\arrow[r, yshift=-1.5ex,"t^{n}_{n-1}"{below}]
& f^{n}(1(n))\cdots\\ 
&\alpha_1(1(0))\arrow[r,"s^{2}_{1}"]
\arrow[rdd,"t^{2}_{1}",near start]&\alpha_2(1(0))\arrow[dash,dotted]{r}
&\alpha_{k-1}(1(0))\arrow[r,"s^{k}_{k-1}"]\arrow[rdd,"t^{k}_{k-1}",near start]&\alpha_{k}(1(0))\arrow[dash,dotted]{r}&
\alpha_{n-1}(1(0))\arrow[rd,"s^{n}_{n-1}"]\\
&&&&&&\xi_n(1(0))\\
&\beta_1(1(0))\arrow[r,"t^{2}_{1}"{below}]
\arrow[ruu,"s^{2}_{1}",near start]&
\beta_2(1(0))\arrow[dash,dotted]{r}
&\beta_{k-1}(1(0))\arrow[r,"t^{k}_{k-1}"{below}]\arrow[ruu,"s^{k}_{k-1}",near start]&\beta_{k}(1(0))\arrow[dash,dotted]{r}&
\beta_{n-1}(1(0))\arrow[ru,"t^{n}_{n-1}"{below}]\\
g^{0}(1(0))\arrow[ruuu,"s^{1}_{0}", near start]\arrow[ru,"t^{1}_{0}"]
 \arrow[r, yshift=1.5ex,"s^{1}_{0}"{below}]\arrow[r, yshift=-1.5ex,"t^{1}_{0}"{below}]
 &g^{1}(1(1))
\arrow[r, yshift=1.5ex,"s^{2}_{1}"]\arrow[r, yshift=-1.5ex,"t^{2}_{1}"{below}]  
& g^{2}(1(2)) 
\arrow[r, yshift=1.5ex,"s^{3}_{2}"]\arrow[r, yshift=-1.5ex,"t^{3}_{2}"{below}]
&g^{3}(1(3)) 
\arrow[r, yshift=1.5ex,"s^{4}_{3}"]\arrow[r, yshift=-1.5ex,"t^{4}_{3}"{below}]
&g^{4}(1(4))\cdots 
g^{n-1}(1(n-1))  
\arrow[r, yshift=1.5ex,"s^{n}_{n-1}"]\arrow[r, yshift=-1.5ex,"t^{n}_{n-1}"{below}]
& g^{n}(1(n))\cdots     
\end{tikzcd}$$

$$\begin{tikzcd}
 2(0)
 \arrow[rr, yshift=1.5ex,"s^{1}_{0}"]\arrow[rr, yshift=-1.5ex,"t^{1}_{0}"{below}]
 &&2(1)
\arrow[rr, yshift=1.5ex,"s^{2}_{1}"]\arrow[rr, yshift=-1.5ex,"t^{2}_{1}"{below}]  
&& 2(2) 
\arrow[rr, yshift=1.5ex,"s^{3}_{2}"]\arrow[rr, yshift=-1.5ex,"t^{3}_{2}"{below}]
&&2(3) 
\arrow[rr, yshift=1.5ex,"s^{4}_{3}"]\arrow[rr, yshift=-1.5ex,"t^{4}_{3}"{below}]
&&2(4)\cdots 2(n-1)  
\arrow[rr, yshift=1.5ex,"s^{n}_{n-1}"]\arrow[rr, yshift=-1.5ex,"t^{n}_{n-1}"{below}]
&& 2(n)\cdots    
\end{tikzcd}$$

With the small category $\mathbb{G}_{n}$ (for all $n\geq 1$) it is possible to associate 
kind of colimits that we call $\mathbb{G}_{n}$-\textit{sums} (see below), which are exactly objects 
 of the category $\big((1+2\downarrow\mathbb{S}^n(1+2)\big)$ (up to isomorphisms). 
These objects are also called $\mathbb{G}_{n}$-\textit{trees}.
Objects $a\in\mathbb{G}_n(0)$ have dimensions where notations of the diagrams above 
indicate these dimensions: $\text{dim}(1(n))=\text{dim}(2(n))=n$, but also 
$\text{dim}(f^{n}(1(n)))=\text{dim}(g^{n}(1(n)))=n$, $\text{dim}(\alpha_{k}(1(0)))=\text{dim}(\beta_{k}(1(0)))=k$, 
and also $\text{dim}(\xi_n(1(0)))=n$.
Thus objects $a\in\mathbb{G}_n(0)$ shall be denoted with a subscript $a_l$ where $l\in\mathbb{N}$ is the
dimension of $a_l$.
Objects of $\mathbb{G}_n$ are equipped with the natural order $<$ provided by arrows
of $\mathbb{G}_n$ : for $a, b\in\mathbb{G}_n(0)$ we have $a<b$ if and only if
there is an arrow  
\begin{tikzcd}
a\arrow[r]&b
\end{tikzcd}
in $\mathbb{G}_n$. We can extend this order $<$ with the order $\leq$ by saying:
$a\leq b$ if and only if $a<b$ or $a=b$.
Now a $\mathbb{G}_n$-tree is given by a table:

$$\begin{pmatrix} 
a_{i_1}&&&&a_{i_2}&&&&a_{i_3}&\cdot&\cdot&\cdot&a_{i_{k-1}}&&&&a_{i_{k}}\\\\
&&a_{i'_1}&&&&a_{i'_2}&&\cdot&\cdot&\cdot&\cdot&\cdot&&a_{i'_{k-1}}
\end{pmatrix}$$

such that $k\geq 1$, and for all $1\leq l\leq k$, $a_{i_{l}}$ are objects of $\mathbb{G}_n$ such that
$\text{dim}(a_{i_{l}})=i_l$, and for all $1\leq l\leq k-1$ we have $a_{i_l}\geq a_{i'_l}\leq a_{i_{l+1}}$.
It is straightforward to see that $\mathbb{G}_n$-trees are globular sets, and that for $n=0$ we 
recover $\mathbb{G}_0$-trees.

Let $\mathcal{C}$ a category and let 
\begin{tikzcd}
\mathbb{G}_n\arrow[rr,"F"]&&\mathcal{C}
\end{tikzcd}
a functor. We denote $F(a_{i_{k}})=D^{a_{i_{k}}}$ and we shall keep the same 
notations for the image of cosources : $F(s^{i_l}_{i_{l'}})=s^{i_l}_{i_{l'}}$, 
and for the image of cotargets : $F(t^{i_l}_{i_{l'}})=t^{i_l}_{i_{l'}}$, because
no risk of confusion will occur. In this case 
\begin{tikzcd}
\mathbb{G}_n\arrow[rr,"F"]&&\mathcal{C}
\end{tikzcd}
is called a globular $\mathbb{G}_n$-extension if for all $\mathbb{G}_n$-trees $t$ as just above, the
colimit 
of the following diagram exist in $\mathcal{C}$:

$$\begin{tikzcd}
D^{a_{i_1}}&&D^{a_{i_2}}&&\cdots&&D^{a_{i_{k-1}}}&&D^{a_{i_k}}\\
&D^{a_{i'_1}}\arrow[lu,"t^{i_1}_{i'_1}"]\arrow[ru,"s^{i_2}_{i'_1}"]
&&D^{a_{i'_2}}\arrow[lu,"t^{i_2}_{i'_2}"]&\cdots&
D^{a_{i'_{k-2}}}\arrow[ru]&&
D^{a_{i'_{k-1}}}\arrow[lu,"t^{i_{k-1}}_{i'_{k-1}}"]\arrow[ru,"s^{i_k}_{i'_{k-1}}"]
\end{tikzcd}$$

Following the terminology of Grothendieck, such colimits are called $\mathbb{G}_n$-\textit{globular sums}.

A morphism of globular $\mathbb{G}_n$-extensions, also called \textit{globular $\mathbb{G}_n$-functor}, is 
given by a commutative triangle in $\CAT$:

$$\begin{tikzcd}
&&\mathcal{C}\arrow[dd,"H"]\\
\mathbb{G}_n\arrow[rru,"F"]\arrow[rrd,"F'"{below}]\\
&&\mathcal{C}'
\end{tikzcd}$$

such that the functor $H$ preserves
$\mathbb{G}_n$-globular sums. The category of globular $\mathbb{G}_n$-extensions
is denoted $\mathbb{G}_n\text{-}\mathbb{E}\text{xt}$. In
fact this category has an initial object denoted 
\begin{tikzcd}
\mathbb{G}_n\arrow[rr,"i"]&&\Theta_n
\end{tikzcd}.
And the small category $\Theta_n$ can be described as the full subcategory
of $[\mathbb{G}^{op}+\mathbb{G}^{op};\E]$ which objects are globular $\mathbb{G}_n$-trees. 
In particular this small category $\Theta_n$ is the basic inductive sketch 
we shall need to describe coherators which set models are globular weak 
$(n,\infty)$-transformations ($n\in\mathbb{N}$) (see \ref{coherator-M}).

A globular $\mathbb{G}_n$-theory is given by a globular $\mathbb{G}_n$-extension
\begin{tikzcd}
\mathbb{G}_n\arrow[rr,"F"]&&\mathcal{C}
\end{tikzcd}
such that the unique induced functor $\overline{F}$ which makes
commutative the diagram:

$$\begin{tikzcd}
&&\Theta_n\arrow[dd,"\overline{F}"]\\
\mathbb{G}_n\arrow[rru,"i"]\arrow[rrd,"F"{below}]\\
&&\mathcal{C}
\end{tikzcd}$$

induces a bijection between objects of $\Theta_n$ and
objects of $\mathcal{C}$. The full subcategory of 
$\mathbb{G}_n\text{-}\mathbb{E}\text{xt}$ which objects
are globular $\mathbb{G}_n$-theories is denoted
$\mathbb{G}_n\text{-}\mathbb{T}\text{h}$.
Consider an object
\begin{tikzcd}
\mathbb{G}_n\arrow[rr,"F"]&&\mathcal{C}
\end{tikzcd}
of $\mathbb{G}_n\text{-}\mathbb{T}\text{h}$,
in particular it induces the globular $\mathbb{G}_n$-functor
\begin{tikzcd}
\Theta_n\arrow[rr,"\overline{F}"]&&\mathcal{C}
\end{tikzcd} 
as just above, which is a bijection on objects. A $\E$-model 
of $(F,\mathcal{C})$ or for $\mathcal{C}$ for short, is given 
by a functor :
\begin{tikzcd}
\mathcal{C}\arrow[rr,"X"]&&\E
\end{tikzcd},
such that the functor $X\circ\overline{F}$:

$$\begin{tikzcd}
\Theta_n\arrow[rr,"\overline{F}"]&&\mathcal{C}\arrow[rr,"X"]&&\E
\end{tikzcd}$$

sends globular $\mathbb{G}_n$-sums to globular $\mathbb{G}_n$-products\footnote{Globular $\mathbb{G}_n$-products
are just dual to globular $\mathbb{G}_n$-sums.}, thus for all objects $t$ of $\Theta_n$:

$$\begin{pmatrix} 
a_{i_1}&&&&a_{i_2}&&&&a_{i_3}&\cdot&\cdot&\cdot&a_{i_{k-1}}&&&&a_{i_{k}}\\\\
&&a_{i'_1}&&&&a_{i'_2}&&\cdot&\cdot&\cdot&\cdot&\cdot&&a_{i'_{k-1}}
\end{pmatrix}$$

we have:

\begin{eqnarray*}
X(\overline{F}(t)) & = &{X\left(\text{colim}\left(\begin{tikzcd}
D^{a_{i_1}}&&D^{a_{i_2}}&&\cdots&&D^{a_{i_{k-1}}}&&D^{a_{i_k}}\\
&D^{a_{i'_1}}\arrow[lu,"t^{i_1}_{i'_1}"]\arrow[ru,"s^{i_2}_{i'_1}"]
&&&\cdots&&&
D^{a_{i'_{k-1}}}\arrow[lu,"t^{i_{k-1}}_{i'_{k-1}}"]\arrow[ru,"s^{i_k}_{i'_{k-1}}"]
\end{tikzcd}\right)\right)} \\
 & = & {X\left( (D^{a_{i_1}},t^{i_1}_{i'_1})\underset{D^{a_{i'_1}}}\coprod (s^{i_2}_{i'_1},D^{a_{i_2}},t^{i_3}_{i'_2})
 \underset{D^{a_{i'_2}}}\coprod
\cdots\underset{D^{a_{i'_{k-1}}}}\coprod
(s^{i_k}_{i'_{k-1}},D^{a_{i_k}})\right)}\\
& \simeq &
{X(D^{a_{i_1}})\underset{X(D^{a_{i'_1}})}\times\cdots
\underset{X(D^{a_{i'_{k-1}}})}\times X(D^{a_{i_k}})}
\end{eqnarray*}

The category of $\E$-models of $\mathcal{C}$ is the full 
subcategory of the category of presheaves
$[\mathcal{C},\E]$ which objects are $\E$-models
of $\mathcal{C}$, and it is denoted $\mathbb{M}\text{od}(\mathcal{C})$.

Now it is easy to see that we get the following globular filtration: 

$$\begin{tikzcd}
 \mathbb{G}_0
 \arrow[dd]\arrow[rr, yshift=1.5ex,"s^{1}_{0}"]\arrow[rr, yshift=-1.5ex,"t^{1}_{0}"{below}]
 &&\mathbb{G}_1
\arrow[dd]\arrow[rr, yshift=1.5ex,"s^{2}_{1}"]\arrow[rr, yshift=-1.5ex,"t^{2}_{1}"{below}]  
&& \mathbb{G}_2 
\arrow[dd]\arrow[rr, yshift=1.5ex,"s^{3}_{2}"]\arrow[rr, yshift=-1.5ex,"t^{3}_{2}"{below}]
&&\mathbb{G}_3
\arrow[dd]\arrow[rr, yshift=1.5ex,"s^{4}_{3}"]\arrow[rr, yshift=-1.5ex,"t^{4}_{3}"{below}]
&&\mathbb{G}_4\arrow[dd,xshift=-4.7ex]\cdots \mathbb{G}_{n-1}  
\arrow[dd,xshift=1.6ex]\arrow[rr, yshift=1.5ex,"s^{n}_{n-1}"]\arrow[rr, yshift=-1.5ex,"t^{n}_{n-1}"{below}]
&& \mathbb{G}_{n}\arrow[dd,xshift=-2ex]\cdots\\\\
\Theta_0
 \arrow[rr, yshift=1.5ex,"s^{1}_{0}"]\arrow[rr, yshift=-1.5ex,"t^{1}_{0}"{below}]
 &&\Theta_1
\arrow[rr, yshift=1.5ex,"s^{2}_{1}"]\arrow[rr, yshift=-1.5ex,"t^{2}_{1}"{below}]  
&&\Theta_2 
\arrow[rr, yshift=1.5ex,"s^{3}_{2}"]\arrow[rr, yshift=-1.5ex,"t^{3}_{2}"{below}]
&&\Theta_3
\arrow[rr, yshift=1.5ex,"s^{4}_{3}"]\arrow[rr, yshift=-1.5ex,"t^{4}_{3}"{below}]
&&\Theta_4\cdots\Theta_{n-1}  
\arrow[rr, yshift=1.5ex,"s^{n}_{n-1}"]\arrow[rr, yshift=-1.5ex,"t^{n}_{n-1}"{below}]
&&\Theta_{n}\cdots    
\end{tikzcd}$$

\subsection{The theories $\Theta_{\mathbb{M}^n}$ for $(n,\infty)$-magmas ($n\in\mathbb{N}$) and their filtration.}
\label{4}

A $(0,\infty)$-magma $M$ is given by a globular set 
\begin{tikzcd}
\mathbb{G}^{\text{op}}\arrow[r,"M"]&\E
\end{tikzcd}
equipped with operations
\begin{tikzcd}
M_n\times_{M_p} M_n\arrow[r,"\circ^{n}_p"]&M_n
\end{tikzcd} 
for all $n\geq 1$ and all $0\leq p\leq n-1$ such that :
\begin{itemize}
\item for $0\leq p<q<m$, 
$s^m_q(y\circ^{m}_{p}x)=s^m_q(y)\circ^{q}_{p}s^m_q(x)$ and 
$t^m_q(y\circ^{m}_{p}x)=t^m_q(y)\circ^{q}_{p}t^m_q(x)$

\item for $0\leq q<p<m$, $s^m_q(y\circ^{m}_{p}x)=s^m_q(y)=s^m_q(x)$
and $t^m_q(y\circ^{m}_{p}x)=t^m_q(y)=t^m_q(x)$

\item for $0\leq p=q<m$, $s^m_q(y\circ^{m}_{p}x)=s^m_q(x)$ and 
$t^m_q(y\circ^{m}_{p}x)=t^m_q(x)$

\end{itemize}

A $(1,\infty)$-magma
\begin{tikzcd}
M\arrow[r,"F"]&M'
\end{tikzcd}
between two $(0,\infty)$-magma $M$ and $M'$ is a morphism of globular sets. 
If 
\begin{tikzcd}
N\arrow[r,"G"]&N'
\end{tikzcd}
is another $(1,\infty)$-magma, then a strict morphism between them:
\begin{tikzcd}
F\arrow[r,"{(h,h')}"]&G
\end{tikzcd}
is given by two morphisms of $(0,\infty)$-magma:
\begin{tikzcd}
M\arrow[r,"h"]&N
\end{tikzcd}
and 
\begin{tikzcd}
M'\arrow[r,"h'"]&N'
\end{tikzcd}
such that the following diagram is commutative in $\Glob$:

$$\begin{tikzcd}
M\arrow[dd,"h"{left}]\arrow[rr,"F"]&&M'\arrow[dd,"h'"]\\\\
N\arrow[rr,"G"{below}]&&N'
\end{tikzcd}$$

The category of $(1,\infty)$-magmas is denoted $(1,\infty)\text{-}\mathbb{M}\text{ag}$.

A $(2,\infty)$-magma
\begin{tikzcd}
 M\arrow[r, bend  left=50, "F"{name=U}]
  \arrow[r, bend right=50, "G"{name=D, below}]
& M'
\arrow[Rightarrow,"\tau"{left}, from=U, to=D]
\end{tikzcd}
is given by two objects 
\begin{tikzcd}
M\arrow[r,"F"]&M'
\end{tikzcd}
and 
\begin{tikzcd}
M\arrow[r,"G"]&M'
\end{tikzcd}
of $(1,\infty)\text{-}\mathbb{M}\text{ag}$
and a morphism:
\begin{tikzcd}
M(0)\arrow[r,"\tau"]&M'(1)
\end{tikzcd}
of $\E$ such that for all $a\in M(0)$, $s^{1}_{0}(\tau(a))=F(a)$ and $t^{1}_{0}(\tau(a))=G(a)$.
If 
\begin{tikzcd}
 N\arrow[r, bend  left=50, "H"{name=U}]
  \arrow[r, bend right=50, "K"{name=D, below}]
& N'
\arrow[Rightarrow,"\rho"{left}, from=U, to=D]
\end{tikzcd}
is another $(2,\infty)$-magma, then a strict morphism between them:
\begin{tikzcd}
\tau\arrow[r,"{(h,h')}"]&\rho
\end{tikzcd}
is given by two morphisms of $(0,\infty)$-magmas:
\begin{tikzcd}
M\arrow[r,"h"]&N
\end{tikzcd}
and 
\begin{tikzcd}
M'\arrow[r,"h'"]&N'
\end{tikzcd}
such that for all $a\in M(0)$ we have: $h'(\tau(a))=\rho(h(a))$.
The category of $(2,\infty)$-magmas is denoted $(2,\infty)\text{-}\mathbb{M}\text{ag}$.

For $n\geq 3$ we define $(n,\infty)$-magmas by induction: suppose the categories 
$(k,\infty)\text{-}\mathbb{M}\text{ag}$ of $(k,\infty)$-magmas are defined for all 
$k\in\llbracket 2,n-1\rrbracket$. An $(n,\infty)$-magma
\begin{tikzcd}
\alpha\arrow[r,"\xi"]&\beta
\end{tikzcd}
between the $(n-1,\infty)$-magmas $\alpha$ and $\beta$ is given by a 
morphism 
\begin{tikzcd}
M(0)\arrow[r,"\xi"]&M'(n)
\end{tikzcd}
in $\E$ such that for all $a\in M(0)$, $s^{n}_{0}(\xi(a))=F(a)$ and $t^{n}_{0}(\xi(a))=G(a)$. 
A morphism between two $(n,\infty)$-magmas
\begin{tikzcd}
\xi\arrow[r,"{(h,h')}"]&\xi'
\end{tikzcd} 
is given by two morphisms of $(0,\infty)$-magmas:
\begin{tikzcd}
M\arrow[r,"h"]&N
\end{tikzcd}
and 
\begin{tikzcd}
M'\arrow[r,"h'"]&N'
\end{tikzcd}
such that for all $a\in M(0)$ we have: $h'(\xi(a))=\xi'(h(a))$.
The category of $(n,\infty)$-magmas is denoted $(n,\infty)\text{-}\mathbb{M}\text{ag}$.

Now for all $n\geq 1$ we have a monadic forgetful functor $U^n$:

$$\begin{tikzcd}
(n,\infty)\text{-}\mathbb{M}\text{ag}\arrow[dd,xshift=1ex,"\dashv"{left},"U^n"]\\\\
\mathbb{G}\text{lob}^2\arrow[uu,xshift=-1ex,"F^n",dotted] 
  \end{tikzcd}$$
  
from the category $(n,\infty)\text{-}\mathbb{M}\text{ag}$ of globular $(n,\infty)$-magmas to the category $\mathbb{G}\text{lob}^2$ of 
pairs of globular sets, which sends $(n,\infty)$-magmas $\xi$ as above to the pair of globular sets $(U(M),U(M'))$. The notation $U(M)$ 
means that we have forgotten the underlying structure of $(0,\infty)$-magma of $M$. We use the equivalence 
$\mathbb{G}\text{lob}^2\simeq [\mathbb{G}_0^{op}+\mathbb{G}_0^{op};\E]$ to get a monad 
$\mathbb{M}^n=(M^n,\eta^n,\mu^n)$ on $[\mathbb{G}_0^{op}+\mathbb{G}_0^{op};\E]$. We have the 
equivalence of categories 
$(n,\infty)\text{-}\mathbb{M}\text{ag}\simeq
\mathbb{M}^n\text{-}\mathbb{A}\text{lg}$ because $U^n$ is 
monadic. The full subcategory 
$\Theta_{\mathbb{M}^n}\subset\mathbb{K}\text{l}(\mathbb{M}^n)$ of
the Kleisli category of $\mathbb{M}^n$ which objects are $\mathbb{G}_n$-trees is
called the theory of globular $(n,\infty)$-magmas. This is our basic example 
of globular $\mathbb{G}_n$-theory. In fact we have the following equivalences of categories:

$$(n,\infty)\text{-}\mathbb{M}\text{ag}\simeq
\mathbb{M}^n\text{-}\mathbb{A}\text{lg}\simeq
\mathbb{M}\text{od}(\Theta_{\mathbb{M}^n})$$

Now it is easy to see that we get the following globular filtration: 

$$\begin{tikzcd}
 \mathbb{G}_0
 \arrow[dd]\arrow[rr, yshift=1.5ex,"s^{1}_{0}"]\arrow[rr, yshift=-1.5ex,"t^{1}_{0}"{below}]
 &&\mathbb{G}_1
\arrow[dd]\arrow[rr, yshift=1.5ex,"s^{2}_{1}"]\arrow[rr, yshift=-1.5ex,"t^{2}_{1}"{below}]  
&& \mathbb{G}_2 
\arrow[dd]\arrow[rr, yshift=1.5ex,"s^{3}_{2}"]\arrow[rr, yshift=-1.5ex,"t^{3}_{2}"{below}]
&&\mathbb{G}_3
\arrow[dd]\arrow[rr, yshift=1.5ex,"s^{4}_{3}"]\arrow[rr, yshift=-1.5ex,"t^{4}_{3}"{below}]
&&\mathbb{G}_4\arrow[dd,xshift=-4.7ex]\cdots\mathbb{G}_{n-1}  
\arrow[dd,xshift=1.6ex]\arrow[rr, yshift=1.5ex,"s^{n}_{n-1}"]\arrow[rr, yshift=-1.5ex,"t^{n}_{n-1}"{below}]
&& \mathbb{G}_{n}\arrow[dd,xshift=-2ex]\cdots\\\\
\Theta_0
 \arrow[dd]\arrow[rr, yshift=1.5ex,"s^{1}_{0}"]\arrow[rr, yshift=-1.5ex,"t^{1}_{0}"{below}]
 &&\Theta_1
\arrow[dd]\arrow[rr, yshift=1.5ex,"s^{2}_{1}"]\arrow[rr, yshift=-1.5ex,"t^{2}_{1}"{below}]  
&&\Theta_2 
\arrow[dd]\arrow[rr, yshift=1.5ex,"s^{3}_{2}"]\arrow[rr, yshift=-1.5ex,"t^{3}_{2}"{below}]
&&\Theta_3
\arrow[dd]\arrow[rr, yshift=1.5ex,"s^{4}_{3}"]\arrow[rr, yshift=-1.5ex,"t^{4}_{3}"{below}]
&&\Theta_4\arrow[dd,xshift=-4.7ex]\cdots\Theta_{n-1}  
\arrow[dd,xshift=1.6ex]\arrow[rr, yshift=1.5ex,"s^{n}_{n-1}"]\arrow[rr, yshift=-1.5ex,"t^{n}_{n-1}"{below}]
&&\Theta_{n}\arrow[dd,xshift=-2ex]\cdots\\\\
 \Theta_{\mathbb{M}^0}
  \arrow[rr, yshift=1.5ex,"s^{1}_{0}"]\arrow[rr, yshift=-1.5ex,"t^{1}_{0}"{below}]
 &&\Theta_{\mathbb{M}^1}
 \arrow[rr, yshift=1.5ex,"s^{2}_{1}"]\arrow[rr, yshift=-1.5ex,"t^{2}_{1}"{below}]  
&&\Theta_{\mathbb{M}^2}
\arrow[rr, yshift=1.5ex,"s^{3}_{2}"]\arrow[rr, yshift=-1.5ex,"t^{3}_{2}"{below}]
&&\Theta_{\mathbb{M}^3}
\arrow[rr, yshift=1.5ex,"s^{4}_{3}"]\arrow[rr, yshift=-1.5ex,"t^{4}_{3}"{below}]
&&\Theta_{\mathbb{M}^4}\cdots\Theta_{\mathbb{M}^{n-1}}
\arrow[rr, yshift=1.5ex,"s^{n}_{n-1}"]\arrow[rr, yshift=-1.5ex,"t^{n}_{n-1}"{below}]
&&\Theta_{\mathbb{M}^n}\cdots  
\end{tikzcd}$$

\section{Globular $\mathbb{G}_n$-coherators ($n\in\mathbb{N}$)}

\subsection{Admissibility}

For a fixed integer $n\geq 1$, let 
\begin{tikzcd}
\mathbb{G}_n\arrow[rr,"F"]&&\mathcal{C}
\end{tikzcd}
be a globular $\mathbb{G}_n$-theory, i.e an object of 
$\mathbb{G}_n\text{-}\mathbb{T}\text{h}$; here we pay 
attention to the inclusion:
\begin{tikzcd}
\mathbb{G}_0+\mathbb{G}_0\arrow[rr,hook]&&\mathbb{G}_n
\end{tikzcd}. An object $c_m$ of $\mathbb{G}_0+\mathbb{G}_0$
can be the object $1(m)$ or the object $2(m)$ for all $m\in\mathbb{N}$;
two arrows:
\begin{tikzcd}
D^{c_m}\arrow[rr, yshift=1.2ex,"f"]
 \arrow[rr, yshift=-1.2ex,"g"{below}]&&t
\end{tikzcd}
in $\mathcal{C}$ are parallels if $fs^{m}_{m-1}=gs^{m}_{m-1}$
and $ft^{m}_{m-1}=gt^{m}_{m-1}$:
$$\begin{tikzcd}
D^{c_m}\arrow[rr, yshift=1.2ex,"f"]
 \arrow[rr, yshift=-1.2ex,"g"{below}]&&t\\\\
 D^{c_{m-1}}\arrow[uu, xshift=-1.2ex,"s^{m}_{m-1}"{left}]
 \arrow[uu, xshift=1.2ex,"t^{m}_{m-1}"{right}] 
\end{tikzcd}$$

Consider a couple $(f,g)$ of parallels arrows in $\mathcal{C}$
as just above. We say that it is admissible or algebraic if they
don't belong to the image of the globular $\mathbb{G}_n$-functor $\overline{F}$ :
$$\begin{tikzcd}
&&\Theta_n\arrow[dd,"\overline{F}"]\\
\mathbb{G}_n\arrow[rru,"i"]\arrow[rrd,"F"{below}]\\
&&\mathcal{C}
\end{tikzcd}$$

Consider a couple $(f,g)$ of arrows of $\mathcal{C}$ which is admissible
 as just above; a lifting of $(f,g)$ is given by an arrow $h$ :
$$\begin{tikzcd}
D^{c_{m+1}}\arrow[rrdd,"h"]\\\\
D^{c_m}\arrow[uu, xshift=-1.2ex,"s^{m+1}_{m}"{left}]
 \arrow[uu, xshift=1.2ex,"t^{m+1}_{m}"{right}] 
\arrow[rr, yshift=1.2ex,"f"]
 \arrow[rr, yshift=-1.2ex,"g"{below}]&&t
 \end{tikzcd}$$
 such that $hs^{m+1}_{m}=f$ and $ht^{m+1}_{m}=g$
 
 \subsection{Batanin-Grothendieck Sequences}
 
 We now define the Batanin-Grothendieck sequence\footnote{Coherators associated to such sequence
 are called \textit{of Batanin-Leinster type} by some authors.} associated to a globular $\mathbb{G}_n$-theory 
 \begin{tikzcd}
\mathbb{G}_n\arrow[rr,"F"]&&\mathcal{C}
\end{tikzcd}. 
We build it by the following induction:

\begin{itemize}
\item If $m=0$ we start with the couple $(\mathcal{C},E)$ where 
$E$ denotes the set of admissible pairs of arrows of $\mathcal{C}$;
we shall write $(\mathcal{C}_0,E_0)=(\mathcal{C},E)$ this first
step.

\item If $m=1$ we consider then the couple $(\mathcal{C}_1,E_1)$
where $\mathcal{C}_1$ is obtained by formally adding in 
$\mathcal{C}_0=\mathcal{C}$ the liftings of all elements 
$(f,g)\in E_0=E$, and $E_1$ is the set of admissible couples of arrows 
in $\mathcal{C}_1$ which are not elements of the set $E_0$;

\item If for $m\geq 2$ the couple $(\mathcal{C}_m,E_m)$ is well defined
then $\mathcal{C}_{m+1}$ is obtained by formally adding in 
$\mathcal{C}_{m}$ the liftings of all elements of $E_m$, and 
$E_{m+1}$ is the set of couples of arrows of $\mathcal{C}_{m+1}$
which are not elements of $E_m$
\end{itemize}
We give a slightly different but equivalent induction to build the Batanin-Grothendieck sequence for such globular theory
\begin{tikzcd}
\mathbb{G}_n\arrow[rr,"F"]&&\mathcal{C}
\end{tikzcd}:

\begin{itemize}
\item If $m=0$ we start with the couple $(\mathcal{C},E)$ where 
$E$ is the set of couple of arrow which are admissible of $\mathcal{C}$;
we denote $E=E_0=E'_0=E'_0\setminus\emptyset$ (we shall see soon the reason of these notations), and $\mathcal{C}_0=\mathcal{C}$;

\item If $m=1$ we consider the couple $(\mathcal{C}_1,E_1)$ where
$\mathcal{C}_1$ is obtained by formally adding in $\mathcal{C}_0$
all liftings of the elements $(f,g)\in E_0$, $E'_1$ is the set of all
pairs of arrows which are admissible in $\mathcal{C}_1$, and 
$E_1=E'_1\setminus E_0$; remark that $E_0=E'_0\subset E'_1$;

\item If $m=2$ we consider the couple $(\mathcal{C}_2,E_2)$ where
$\mathcal{C}_2$ is obtained by formally adding in $\mathcal{C}_1$
all liftings of the elements $(f,g)\in E_1$, $E'_2$ is the set of all
pairs of arrows which are admissible in $\mathcal{C}_2$, and
$E_2=E'_2\setminus E'_1$;

\item For $m\geq 3$ we suppose that the couple $(\mathcal{C}_m,E_m)$
is well defined with $E_m=E'_m\setminus E'_{m-1}$, then 
$\mathcal{C}_{m+1}$ is obtained by formally adding in $\mathcal{C}_m$
all liftings of the elements $(f,g)\in E_m$, $E'_{m+1}$ is the set of all
pairs of arrows which are admissible in $\mathcal{C}_{m+1}$, and
$E_{m+1}=E'_{m+1}\setminus E'_m$;
\end{itemize}

 The Batanin-Grothendieck sequence of the globular theory
\begin{tikzcd}
\mathbb{G}_n\arrow[rr,"F"]&&\mathcal{C}
\end{tikzcd} 
produces the following filtered diagram:

\begin{tikzcd}
(\mathbb{N},\leq)\arrow[rr,"\mathcal{C}_{\bullet}"]&&
\mathbb{G}_n\text{-}\mathbb{T}\text{h}
\end{tikzcd}
in the category $\mathbb{G}_n\text{-}\mathbb{T}\text{h}$:

$$\begin{tikzcd}
\mathcal{C}_0\arrow[rr,"i_1"]&&\mathcal{C}_1\arrow[rr,"i_2"]&&\cdots\arrow[rr,"i_m"]&&\mathcal{C}_m\arrow[rr]&&\cdots
\end{tikzcd}$$

\subsection{$\mathbb{G}_n$-coherators for globular $\mathbb{G}_n$-theories ($n\in\mathbb{N}$)}

Let us fixed an integer $n\geq 1$. We start with datas of the previous subsection, i.e 
with the Batanin-Grothendieck sequence \begin{tikzcd}
(\mathbb{N},\leq)\arrow[rr,"\mathcal{C}_{\bullet}"]&&
\mathbb{G}_n\text{-}\mathbb{T}\text{h}
\end{tikzcd}
for a globular $\mathbb{G}_n$-theory 
\begin{tikzcd}
\mathbb{G}_n\arrow[rr,"F"]&&\mathcal{C}
\end{tikzcd}:

\begin{definition}
 The colimit 
\begin{tikzcd}
\mathbb{G}_n\arrow[rr,"F_{\infty}"]&&\mathcal{C}_{\infty}
\end{tikzcd}
of the previous filtered diagram $\mathcal{C}_{\bullet}$ :
$$\begin{tikzcd}
\mathcal{C}_0\arrow[rr,"i_1"]\arrow[rrrrrrrrdd]
&&\mathcal{C}_1\arrow[rr,"i_2"]\arrow[rrrrrrdd]&&\cdots\arrow[rr,"i_m"]&&\mathcal{C}_m\arrow[rr]\arrow[rrdd]&&\cdots\\\\
&&&&&&&&\mathcal{C}_{\infty}
\end{tikzcd}$$
is called the globular $\mathbb{G}_n$-coherator of the type Batanin-Grothendieck associated to the globular $\mathbb{G}_n$-theory
\begin{tikzcd}
\mathbb{G}_n\arrow[rr,"F"]&&\mathcal{C}
\end{tikzcd}
\end{definition}
For shorter terminology we shall say that 
\begin{tikzcd}
\mathbb{G}_n\arrow[rr,"F_{\infty}"]&&\mathcal{C}_{\infty}
\end{tikzcd}
is the $\mathbb{G}_n$-coherator associated to the globular $\mathbb{G}_n$-theory
\begin{tikzcd}
\mathbb{G}_n\arrow[rr,"F"]&&\mathcal{C}
\end{tikzcd}.
It is straightforward to see that the Batanin-Grothendieck construction of $\mathbb{G}_n$-coherators associated to globular $\mathbb{G}_n$-theory 
is functorial, and the following functor $\Phi_n$ is called the Batanin-Grothendieck functor:

$$\begin{tikzcd}
\mathbb{G}_n\text{-}\mathbb{T}\text{h}
\arrow[rr,"\Phi_n"]&&\mathbb{G}_n\text{-}\mathbb{T}\text{h}\\
\mathcal{C}\arrow[rr,mapsto]&&\mathcal{C}^{\infty}
\end{tikzcd}$$

\subsection{The $\mathbb{G}_n$-coherators $\Theta^{\infty}_{\mathbb{M}^n}$ ($n\in\mathbb{N}$)}
\label{coherator-M}

Let us fixed an integer $n\geq 1$. The $\mathbb{G}_n$-coherator associated to the globular theory 
\begin{tikzcd}
\mathbb{G}_n\arrow[rr,"j"]&&\Theta_{\mathbb{M}^n}
\end{tikzcd}
that we obtain with the composition:

$$\begin{tikzcd}
\mathbb{G}_n\arrow[rr,"i"]&&\Theta_{n}
\arrow[rr,hook]&&\Theta_{\mathbb{M}^n}
\end{tikzcd}$$

is denoted $\Theta^{\infty}_{\mathbb{M}^n}$.

\begin{definition}
Globular weak $(n,\infty)$-transformations are objects of the category $\mathbb{M}\text{od}(\Theta^{\infty}_{\mathbb{M}^n})$.
\end{definition}

Morphisms in $\Theta^{\infty}_{\mathbb{M}^n}$ which doesn't belong to the image of
\begin{tikzcd}
\Theta_{n}
\arrow[rr,hook]&&\Theta^{\infty}_{\mathbb{M}^n}
\end{tikzcd}
are called \textit{algebraic morphisms} of $\Theta^{\infty}_{\mathbb{M}^n}$. Morphisms in $\Theta^{\infty}_{\mathbb{M}^n}$
of the form:
\begin{tikzcd}
1(p)\arrow[r,"K"]&t
\end{tikzcd}
or 
\begin{tikzcd}
2(p)\arrow[r,"K"]&t
\end{tikzcd},
where $t$ are any $\mathbb{G}_n$-trees, 
are the $p$-\textit{operations} of $\Theta^{\infty}_{\mathbb{M}^n}$.

\begin{definition}
If $\tau\in\mathbb{M}\text{od}(\Theta^{\infty}_{\mathbb{M}^n})$ and $p\geq 1$ is an integer, then $\text{dim}(\tau)=p$ if 
for all $q>p$, any $q$-operations $K$ in $\Theta^{\infty}_{\mathbb{M}^n}$ has its source and target which are equalize 
by $\tau$. Thus if we have:

$$\begin{tikzcd}
1(q)\arrow[rrrdd,"K"]\\\\
1(q-1)\arrow[uu,xshift=1.1ex,"t^{q}_{q-1}"{right}]\arrow[uu,xshift=-1.1ex,"s^{q}_{q-1}"]
\arrow[rrr, yshift=1.1ex,"{K\circ s^{q}_{q-1}}"]
\arrow[rrr, yshift=-1.1ex,"{K\circ t^{q}_{q-1}}"{below}]&&&t
\end{tikzcd}$$

or:

$$\begin{tikzcd}
2(q)\arrow[rrrdd,"K"]\\\\
2(q-1)\arrow[uu,xshift=1.1ex,"t^{q}_{q-1}"{right}]\arrow[uu,xshift=-1.1ex,"s^{q}_{q-1}"]
\arrow[rrr, yshift=1.1ex,"{K\circ s^{q}_{q-1}}"]
\arrow[rrr, yshift=-1.1ex,"{K\circ t^{q}_{q-1}}"{below}]&&&t
\end{tikzcd},$$

then $\tau(K\circ s^{q}_{q-1})=\tau(K\circ t^{q}_{q-1})$.
\end{definition}

\begin{remark}
In 2019 John Bourke has proved \cite{bourke-injectif} the \textit{Ara conjecture} \cite{ara-these} which says that the category of 
globular weak $\infty$-categories of Batanin is equivalent to the category of globular weak 
$\infty$-categories of Grothendieck:

$$\mathbb{M}\text{od}(\Theta_{\mathbb{B}^{0}_C})
\simeq\mathbb{M}\text{od}(\Theta^{\infty}_{\mathbb{M}})$$

where here $\mathbb{B}^{0}_C$ denotes the globular operad of Batanin \cite{batanin-main} which 
algebras are his models of globular weak $\infty$-categories and $\Theta_{\mathbb{B}^{0}_C}$ is its associated theory. In the 
same veine, for all integers $n\geq 1$, the category of 
globular weak $(n,\infty)$-transformations defined in \cite{cam-cgasa3} should be equivalent to the category of globular weak 
$(n,\infty)$-transformations with this $\mathbb{G}_n$-coherator $\Theta^{\infty}_{\mathbb{M}^m}$:

$$\mathbb{M}\text{od}(\Theta_{\mathbb{B}^{n}_C})
\simeq\mathbb{M}\text{od}(\Theta^{\infty}_{\mathbb{M}^n})$$

where $\mathbb{B}^{n}_C$ denotes the globular operad \cite{cam-cgasa3} which 
algebras are models of globular weak $(n,\infty)$-transformations and $\Theta_{\mathbb{B}^{n}_C}$ is its associated theory.
\end{remark}

Now it is easy to see that we get the following globular filtration: 

$$\begin{tikzcd}
 \mathbb{G}_0
 \arrow[dd]\arrow[rr, yshift=1.5ex,"s^{1}_{0}"]\arrow[rr, yshift=-1.5ex,"t^{1}_{0}"{below}]
 &&\mathbb{G}_1
\arrow[dd]\arrow[rr, yshift=1.5ex,"s^{2}_{1}"]\arrow[rr, yshift=-1.5ex,"t^{2}_{1}"{below}]  
&& \mathbb{G}_2 
\arrow[dd]\arrow[rr, yshift=1.5ex,"s^{3}_{2}"]\arrow[rr, yshift=-1.5ex,"t^{3}_{2}"{below}]
&&\mathbb{G}_3
\arrow[dd]\arrow[rr, yshift=1.5ex,"s^{4}_{3}"]\arrow[rr, yshift=-1.5ex,"t^{4}_{3}"{below}]
&&\mathbb{G}_4\arrow[dd,xshift=-4.7ex]\cdots\mathbb{G}_{n-1}  
\arrow[dd,xshift=1.6ex]\arrow[rr, yshift=1.5ex,"s^{n}_{n-1}"]\arrow[rr, yshift=-1.5ex,"t^{n}_{n-1}"{below}]
&& \mathbb{G}_{n}\arrow[dd,xshift=-2ex]\cdots\\\\
\Theta_0
 \arrow[dd]\arrow[rr, yshift=1.5ex,"s^{1}_{0}"]\arrow[rr, yshift=-1.5ex,"t^{1}_{0}"{below}]
 &&\Theta_1
\arrow[dd]\arrow[rr, yshift=1.5ex,"s^{2}_{1}"]\arrow[rr, yshift=-1.5ex,"t^{2}_{1}"{below}]  
&&\Theta_2 
\arrow[dd]\arrow[rr, yshift=1.5ex,"s^{3}_{2}"]\arrow[rr, yshift=-1.5ex,"t^{3}_{2}"{below}]
&&\Theta_3
\arrow[dd]\arrow[rr, yshift=1.5ex,"s^{4}_{3}"]\arrow[rr, yshift=-1.5ex,"t^{4}_{3}"{below}]
&&\Theta_4\arrow[dd,xshift=-4.7ex]\cdots\Theta_{n-1}  
\arrow[dd,xshift=1.6ex]\arrow[rr, yshift=1.5ex,"s^{n}_{n-1}"]\arrow[rr, yshift=-1.5ex,"t^{n}_{n-1}"{below}]
&&\Theta_{n}\arrow[dd,xshift=-2ex]\cdots\\\\
 \Theta_{\mathbb{M}^0}
 \arrow[dd] \arrow[rr, yshift=1.5ex,"s^{1}_{0}"]\arrow[rr, yshift=-1.5ex,"t^{1}_{0}"{below}]
 &&\Theta_{\mathbb{M}^1}
 \arrow[dd]\arrow[rr, yshift=1.5ex,"s^{2}_{1}"]\arrow[rr, yshift=-1.5ex,"t^{2}_{1}"{below}]  
&&\Theta_{\mathbb{M}^2}
\arrow[dd]\arrow[rr, yshift=1.5ex,"s^{3}_{2}"]\arrow[rr, yshift=-1.5ex,"t^{3}_{2}"{below}]
&&\Theta_{\mathbb{M}^3}
\arrow[dd]\arrow[rr, yshift=1.5ex,"s^{4}_{3}"]\arrow[rr, yshift=-1.5ex,"t^{4}_{3}"{below}]
&&\Theta_{\mathbb{M}^4}\arrow[dd,xshift=-4.7ex]\cdots\Theta_{\mathbb{M}^{n-1}}\arrow[dd]
\arrow[dd,xshift=1.6ex]\arrow[rr, yshift=1.5ex,"s^{n}_{n-1}"]\arrow[rr, yshift=-1.5ex,"t^{n}_{n-1}"{below}]
&&\Theta_{\mathbb{M}^n}\arrow[dd,xshift=-2ex]\cdots \\\\
\Theta^{\infty}_{\mathbb{M}^0}
  \arrow[rr, yshift=1.5ex,"s^{1}_{0}"]\arrow[rr, yshift=-1.5ex,"t^{1}_{0}"{below}]
 &&\Theta^{\infty}_{\mathbb{M}^1}
 \arrow[rr, yshift=1.5ex,"s^{2}_{1}"]\arrow[rr, yshift=-1.5ex,"t^{2}_{1}"{below}]  
&&\Theta^{\infty}_{\mathbb{M}^2}
\arrow[rr, yshift=1.5ex,"s^{3}_{2}"]\arrow[rr, yshift=-1.5ex,"t^{3}_{2}"{below}]
&&\Theta^{\infty}_{\mathbb{M}^3}
\arrow[rr, yshift=1.5ex,"s^{4}_{3}"]\arrow[rr, yshift=-1.5ex,"t^{4}_{3}"{below}]
&&\Theta^{\infty}_{\mathbb{M}^4}\cdots\Theta^{\infty}_{\mathbb{M}^{n-1}}
\arrow[rr, yshift=1.5ex,"s^{n}_{n-1}"]\arrow[rr, yshift=-1.5ex,"t^{n}_{n-1}"{below}]
&&\Theta^{\infty}_{\mathbb{M}^n}\cdots 
\end{tikzcd}$$

\subsection{Computations in dimensions $2$}

A natural transformation $\tau$:

$$\begin{tikzcd}
X\arrow[r, bend left=50, ""{name=F}{above},"F"]
\arrow[r, bend right=50, ""{name=H}{below},"H"{below}]
&Z
\arrow[Rightarrow,"\tau"{left}, from=F,to=H]
\end{tikzcd}$$

where $X$ and $Z$ are categories and $F$ and $H$ are functors, is given 
by a class of arrows:

$$\left(\begin{tikzcd}
F(x)\arrow[r,"{\tau(x)}"]&H(x)
\end{tikzcd}\right)_{x\in X(0)}$$

in $Z$ such that if 
\begin{tikzcd}
x\arrow[r,"a"]&y
\end{tikzcd}
is an arrow of $X$, then we have the following commutative diagrams:

$$\begin{tikzcd}
F(x)\arrow[dd,"F(a)"{left}]\arrow[rr,"{\tau(x)}"]&&H(x)\arrow[dd,"H(a)"]\\\\
F(y)\arrow[rr,"{\tau(y)}"{below}]&&H(y)
\end{tikzcd}$$

If instead the datas:

$$\begin{tikzcd}
X\arrow[r, bend left=50, ""{name=F}{above},"F"]
\arrow[r, bend right=50, ""{name=H}{below},"H"{below}]
&Z
\arrow[Rightarrow,"\tau"{left}, from=F,to=H]
\end{tikzcd}$$

are given by bicategories $X$ and $Z$, pseudo-$2$-functors $F$ and $H$, 
then a class of $1$-cells:

$$\left(\begin{tikzcd}
F(x)\arrow[r,"{\tau(x)}"]&H(x)
\end{tikzcd}\right)_{x\in X(0)}$$

of the bicategory $Z$, and if 
\begin{tikzcd}
x\arrow[r,"a"]&y
\end{tikzcd}
is a $1$-cell of $X$, then we have the following coherence $2$-cell $\omega(a)$:

$$\begin{tikzcd}
  F(x)\arrow[rr,"{\tau(x)}"] \ar[dd,"F(a)"{left}]
  \arrow[from=rr,to=dd,phantom,"\omega(a)"{above},""{name=1,near start},""{name=2,near end}]
  \arrow[Rightarrow,from=1,to=2]&&H(x)\arrow[dd,"H(a)"]\\\\
  F(y)\arrow[rr,"{\tau(y)}"{below}]&&H(y)
\end{tikzcd}$$

such that if 
\begin{tikzcd}
x\arrow[r,"a"]&y\arrow[r,"b"]&z
\end{tikzcd}
are $1$-cells of $X$ then we have the following commutative diagram:

$$\begin{tikzcd}
H_1(b)\circ^{1}_{0}(H_1(a)\circ^{1}_{0}\tau_1(x))\arrow[d,"{a(H_1(b),H_1(a),\tau_1(x))}"{left}]\arrow[rr,"{1_{H_1(b)}\circ^{2}_{0}\omega(a)}"]
&&H_1(b)\circ^{1}_{0}(\tau_1(y)\circ^{1}_{0}F_1(a))\arrow[d,,"{a(H_1(b),\tau_1(y),F_1(a))}"]\\
(H_1(b)\circ^{1}_{0}H_1(a))\circ^{1}_{0}\tau_1(x)\arrow[d,"{d_1(a,b)\circ^{2}_{0}1_{\tau_1(x)}}"{left}]
&&(H_1(b)\circ^{1}_{0}\tau_1(y))\circ^{1}_{0}F_1(a)\arrow[d,"{\omega(b)\circ{2}_{0}1_{F_1(a)}}"]\\
H_1(b\circ^{1}_{0}a)\circ^{1}_{0}\tau_1(x)\arrow[d,"{\omega(b\circ{1}_{0}a)}"{left}]
&&(\tau_1(z)\circ^{1}_{0}F_1(b))\circ^{1}_{0}F_1(a)\arrow[d,"{a(\tau_1(z),F_1(b),F_1(a))}"]\\
\tau_1(z)\circ^{1}_{0}F_1(b\circ^{1}_{0}a)
&&\tau_1(z)\circ^{1}_{0}(F_1(b)\circ^{1}_{0}F_1(a))\arrow[ll,"{1_{\tau_1(z)\circ^{2}_{0}d_0(b,a)}}"{below}]
\end{tikzcd}$$

\begin{definition}
Such $\tau$ described above are called pseudo-$2$-natural transformations. 
\end{definition}

We are going to show that $\Theta^{\infty}_{\mathbb{M}^2}$-models in $\E$ of dimension $2$, i.e globular weak $(2,\infty)$-natural 
transformations of dimension $2$, are pseudo-$2$-natural transformations. 

The following diagram of theories:

$$\begin{tikzcd}
\Theta^{\infty}_{\mathbb{M}^0}
  \arrow[rr, yshift=1.5ex,"s^{1}_{0}"]\arrow[rr, yshift=-1.5ex,"t^{1}_{0}"{below}]
 &&\Theta^{\infty}_{\mathbb{M}^1}
 \arrow[rr, yshift=1.5ex,"s^{2}_{1}"]\arrow[rr, yshift=-1.5ex,"t^{2}_{1}"{below}]  
&&\Theta^{\infty}_{\mathbb{M}^2}
\end{tikzcd}$$

leads, by passing to $\E$-models, to the following diagram in $\CAT$:

$$\begin{tikzcd}
\mathbb{M}\text{od}(\Theta^{\infty}_{\mathbb{M}^2})
  \arrow[rr, yshift=1.5ex,"\sigma^{2}_{1}"]\arrow[rr, yshift=-1.5ex,"\tau^{2}_{1}"{below}]
 &&\mathbb{M}\text{od}(\Theta^{\infty}_{\mathbb{M}^1})
 \arrow[rr, yshift=1.5ex,"\sigma^{1}_{0}"]\arrow[rr, yshift=-1.5ex,"\tau^{1}_{0}"{below}]  
&&\mathbb{M}\text{od}(\Theta^{\infty}_{\mathbb{M}^0})
\end{tikzcd}$$

Thus if $\tau$ is a $\Theta^{\infty}_{\mathbb{M}^2}$-models in $\E$ then 
$F:=\sigma^{2}_{1}(\tau)\in\mathbb{M}\text{od}(\Theta^{\infty}_{\mathbb{M}^1})$ is the domain of 
$\tau$ and $H:=\tau^{2}_{1}(\tau)\in\mathbb{M}\text{od}(\Theta^{\infty}_{\mathbb{M}^1})$ is the codomain of 
$\tau$. Both $F$ and $H$ are globular weak $\infty$-functors. Also
$X:=\sigma^{2}_{0}(\tau)\in\mathbb{M}\text{od}(\Theta^{\infty}_{\mathbb{M}^0})$ and 
$Z:=\tau^{2}_{0}(\tau)\in\mathbb{M}\text{od}(\Theta^{\infty}_{\mathbb{M}^0})$ are the underlying
globular weak $\infty$-categories of $\tau$. Thus for shorter notation we write:

$$\begin{tikzcd}
X\arrow[r, bend left=50, ""{name=F}{above},"F"]
\arrow[r, bend right=50, ""{name=H}{below},"H"{below}]
&Z
\arrow[Rightarrow,"\tau"{left}, from=F,to=H]
\end{tikzcd}$$

such globular weak $(2,\infty)$-natural transformation.

\begin{proposition}
With the notation above, if $\tau$ is a $\Theta^{\infty}_{\mathbb{M}^2}$-models in $\E$
of dimension $2$, then it has a structure of pseudo-$2$-natural transformation.
\end{proposition}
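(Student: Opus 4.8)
\emph{Proof strategy.}
The plan is to unwind the $\Theta^{\infty}_{\mathbb{M}^2}$-model $\tau$ along the filtration $\Theta^{\infty}_{\mathbb{M}^0}\hookrightarrow\Theta^{\infty}_{\mathbb{M}^1}\hookrightarrow\Theta^{\infty}_{\mathbb{M}^2}$ and to read off, dimension by dimension, exactly the data appearing in the Definition of a pseudo-$2$-natural transformation. First I would treat the underlying globular weak $\infty$-categories $X=\sigma^{2}_{0}(\tau)$ and $Z=\tau^{2}_{0}(\tau)$. Since $\tau$ has dimension $2$, every $q$-operation with $q\geq 3$ has source and target equalized by $\tau$, so $X$ and $Z$ are $\Theta^{\infty}_{\mathbb{M}^0}$-models of dimension $2$; by the truncated comparison between the Grothendieck and Batanin coherators recalled in the Introduction (and in the Remark), such models are exactly bicategories, their associators $a(-,-,-)$ and unitors arising as those $2$-operations of $\Theta^{\infty}_{\mathbb{M}^0}$ that become invertible once the $3$-operations are killed. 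Running the same argument on $F=\sigma^{2}_{1}(\tau)$ and $H=\tau^{2}_{1}(\tau)$, which are $\Theta^{\infty}_{\mathbb{M}^1}$-models of dimension $2$, identifies them with pseudo-$2$-functors $X\to Z$ together with their compositor and unitor constraints $d_{0},d_{1}$ (again the surviving $2$-operations of $\Theta^{\infty}_{\mathbb{M}^1}$).

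Next I would extract the genuinely new transformation data from the $\xi$-generator of the globe category and from the liftings produced by the Batanin--Grothendieck sequence for $\Theta^{\infty}_{\mathbb{M}^2}$. For each $0$-cell $x$ of $X$ the model yields the $1$-cell $\tau(x)\colon F(x)\to H(x)$ of $Z$, directly from the image of $\xi$ and the boundary relations $s^{1}_{0}\tau=F$, $t^{1}_{0}\tau=H$ already present in $\Theta_{\mathbb{M}^2}$. For each $1$-cell $a\colon x\to y$ of $X$ the two composites $H(a)\circ^{1}_{0}\tau(x)$ and $\tau(y)\circ^{1}_{0}F(a)$ are parallel arrows in $\mathcal{C}_{0}=\Theta_{\mathbb{M}^2}$, built entirely from globular sums and operations already present, hence an admissible pair; the lifting adjoined at the first stage of the sequence provides the coherence $2$-cell $\omega(a)$. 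Invertibility of $\omega(a)$ then follows from the admissible pair taken in the reverse order together with the dimension-$2$ hypothesis: the two $3$-operations witnessing that the composites of $\omega(a)$ with its candidate inverse are coherently identities have their sources and targets equalized by $\tau$, so $\omega(a)$ is a genuine isomorphism $2$-cell of $Z$.

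The remaining axioms — the hexagon relating $\omega(b\circ^{1}_{0}a)$, $\omega(a)$, $\omega(b)$ and the associativity/compositor constraints displayed just before the Proposition, together with the naturality of $\omega$ in $2$-cells of $X$ — are, in each case, an instance of the clause defining $\dim(\tau)=2$. For each such diagram there is a $3$-operation $K$ of $\Theta^{\infty}_{\mathbb{M}^2}$ whose two restrictions $K\circ s^{3}_{2}$ and $K\circ t^{3}_{2}$ become, after rewriting through the associators $a(-,-,-)$ of $Z$ and the constraints $d_{0},d_{1}$ of $F$ and $H$ (these being themselves images of lower operations), the two legs of the diagram; since $\tau$ equalizes $K\circ s^{3}_{2}$ and $K\circ t^{3}_{2}$, the diagram commutes. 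Assembling these identifications shows that $(X,Z,F,H,\tau(-),\omega(-))$ satisfies precisely the axioms of the Definition, so $\tau$ carries a canonical pseudo-$2$-natural transformation structure, and conversely.

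\emph{Main obstacle.}
The delicate point is not extracting the data but checking that nothing is missing and nothing is overdetermined. Concretely, one must (i) verify admissibility, i.e.\ that the candidate parallel pairs $\bigl(H(a)\circ^{1}_{0}\tau(x),\ \tau(y)\circ^{1}_{0}F(a)\bigr)$ and the higher pairs do not already lie in the image of $\Theta_{2}\hookrightarrow\Theta^{\infty}_{\mathbb{M}^2}$, so that the liftings are actually adjoined; and (ii) prove a truncated coherence statement: every equality between $2$-dimensional composites that $\Theta^{\infty}_{\mathbb{M}^2}$ imposes on a dimension-$2$ model is a formal consequence of the bicategory axioms of $Z$, the pseudofunctoriality of $F$ and $H$, and the finite list of pseudo-naturality axioms. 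This is the one-level-up analogue of Bourke's resolution of the Ara conjecture, and I expect it to require the same kind of contractibility bookkeeping along the tower $\mathcal{C}_{\bullet}$ used to build $\Theta^{\infty}_{\mathbb{M}^2}$.
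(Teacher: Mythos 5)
Your proposal follows essentially the same route as the paper: you produce $\tau(x)$, $\omega(a)$, $a(-,-,-)$, $d_0$, $d_1$ as liftings of admissible parallel pairs in the coherator, and you obtain the coherence diagram by noting that its two legs are parallel composite $2$-operations of $\Theta^{\infty}_{\mathbb{M}^2}$, hence joined by a lifted $3$-operation whose source and target the dimension-$2$ hypothesis forces $\tau$ to equalize --- which is exactly the paper's explicit construction of the operations $(ABCDEF)$ and $(AGHF)$ and of their lifting $[(ABCDEF);(AGHF)]$. The additional identifications you sketch (truncated models as bicategories, invertibility of $\omega(a)$) go beyond what the paper's minimal definition of pseudo-$2$-natural transformation requires, but they do not alter the core argument.
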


\begin{proof}

We need to exhibit a commutative diagram:

$$\begin{tikzcd}
H_1(b)\circ^{1}_{0}(H_1(a)\circ^{1}_{0}\tau_1(x))\arrow[d,"{a(H_1(b),H_1(a),\tau_1(x))}"{left}]\arrow[rr,"{1_{H_1(b)}\circ^{2}_{0}\omega(a)}"]
&&H_1(b)\circ^{1}_{0}(\tau_1(y)\circ^{1}_{0}F_1(a))\arrow[d,,"{a(H_1(b),\tau_1(y),F_1(a))}"]\\
(H_1(b)\circ^{1}_{0}H_1(a))\circ^{1}_{0}\tau_1(x)\arrow[d,"{d_1(a,b)\circ^{2}_{0}1_{\tau_1(x)}}"{left}]
&&(H_1(b)\circ^{1}_{0}\tau_1(y))\circ^{1}_{0}F_1(a)\arrow[d,"{\omega(b)\circ{2}_{0}1_{F_1(a)}}"]\\
H_1(b\circ^{1}_{0}a)\circ^{1}_{0}\tau_1(x)\arrow[d,"{\omega(b\circ{1}_{0}a)}"{left}]
&&(\tau_1(z)\circ^{1}_{0}F_1(b))\circ^{1}_{0}F_1(a)\arrow[d,"{a(\tau_1(z),F_1(b),F_1(a))}"]\\
\tau_1(z)\circ^{1}_{0}F_1(b\circ^{1}_{0}a)
&&\tau_1(z)\circ^{1}_{0}(F_1(b)\circ^{1}_{0}F_1(a))\arrow[ll,"{1_{\tau_1(z)\circ^{2}_{0}d_0(b,a)}}"{below}]
\end{tikzcd}$$

for $1$-cells 
\begin{tikzcd}
x\arrow[r,"a"]&y\arrow[r,"b"]&z
\end{tikzcd}
of $X$, where are involved the coherence $2$-cells:

$$\begin{tikzcd}
a\circ^{1}_{0}(b\circ^{1}_{0}c)\arrow[r,"{a(a,b,c)}"]&(a\circ^{1}_{0}b)\circ^{1}_{0}c
\end{tikzcd}\qquad\begin{tikzcd}
 H_1(a)\circ^{1}_{0}\tau(x)\arrow[r,"{\omega(a)}"]&\tau(y)\circ^{1}_{0}F_1(a) 
 \end{tikzcd}$$
 
 and:
 
$$\begin{tikzcd}
F_1(b)\circ^{1}_{0}F_1(a)\arrow[r,"{d_0(b,a)}"]&F_1(b\circ^{1}_{0}a)
\end{tikzcd}\qquad\begin{tikzcd}
H_1(b)\circ^{1}_{0}H_1(a)\arrow[r,"{d_1(b,a)}"]&H_1(b\circ^{1}_{0}a)
\end{tikzcd}$$

In fact this diagram is the realization in $\E$, through the presheaf $\tau$, of some conglomerate of 
operations living in $\Theta^{\infty}_{\mathbb{M}^2}$. We are going to describe each operations 
which underlies expressions in this diagram, and this shall lead to the conglomerate of operations
leading, by passing to $\E$-models, to this diagram. 

In order to organize our computations we write:

$$\begin{tikzcd}
A\arrow[d,"(AG)"{left}]\arrow[rrr,"(AB)"]
&&&B\arrow[d,,"(BC)"]\\
G\arrow[d,"(GH)"{left}]
&&&C\arrow[d,"(CD)"]\\
H\arrow[d,"(HF)"{left}]
&&&D\arrow[d,"(DE)"]\\
F
&&&E\arrow[lll,"(EF)"{below}]
\end{tikzcd}$$

\begin{itemize}
\item Operation for $(AB)$:

The operations: 

$$\begin{tikzcd}
h_1(1(1))\star^{1}_{0}\tau(1(0))\arrow[r,"{H_1\star^{1}_{0}\tau}"]&2(1)\star^{1}_{0}2(1)\arrow[r,"\nu^{1}_0"]&2(1),
\end{tikzcd}\qquad\begin{tikzcd}
\tau(1(0))\star^{1}_{0}f_1(1(1))\arrow[r,"{\tau\star^{1}_{0}F_1}"]&2(1)\star^{1}_{0}2(1)\arrow[r,"\nu^{1}_0"]&2(1),
\end{tikzcd}$$

are such that $h_1(1(1))\star^{1}_{0}\tau(1(0))=\tau(1(0))\star^{1}_{0}f_1(1(1))$, also they are parallels, thus lead
to the operation $\omega$:

$$\begin{tikzcd}
2(2)\arrow[rrrdd,"\omega"]\\\\
2(1)\arrow[uu,xshift=1.1ex,"t^{2}_{1}"{right}]\arrow[uu,xshift=-1.1ex,"s^{2}_{1}"]\arrow[rrr, yshift=1.1ex,"{(H_1\star^{1}_{0}\tau)\circ\nu^{1}_{0}}"]
\arrow[rrr, yshift=-1.1ex,"{(\tau\star^{1}_{0}F_1)\circ\nu^{1}_{0}}"{below}]&&&h_1(1(1))\star^{1}_{0}\tau(1(0))
\end{tikzcd}$$

also we have the operation $[H_1;H_1]$:

$$\begin{tikzcd}
2(2)\arrow[rrrdd,"{[H_1;H_1]}"]\\\\
2(1)\arrow[uu,xshift=1.1ex,"t^{2}_{1}"{right}]\arrow[uu,xshift=-1.1ex,"s^{2}_{1}"]
\arrow[rrr, yshift=1.1ex,"H_1"]
\arrow[rrr, yshift=-1.1ex,"H_1"{below}]&&&h_1(1(1))
\end{tikzcd}$$

thus this leads to the operation:

$$\begin{tikzcd}
2(2)\arrow[r,"\nu^{2}_{0}"]&2(2)\star^{2}_{0}2(2)\arrow[rr,"{[H_1;H_1]\star^{2}_{0}\omega}"]&&
h_1(1(1))\star^{1}_{0}h_1(1(1))\star^{1}_{0}\tau(1(0))
\end{tikzcd}$$

which gives the operation $(AB)$:

$$\begin{tikzcd}
2(2)\arrow[rrr,"{([H_1;H_1]\star^{2}_{0}\omega)\circ\nu^{2}_{0}}"]&&&h_1(1(1))\star^{1}_{0}h_1(1(1))\star^{1}_{0}\tau(1(0))
\end{tikzcd}$$

\item Operation for $(BC)$:

The coherence for associativity is builds as follow:

$$\begin{tikzcd}
2(2)\arrow[rrrdd,"a"]\\\\
2(1)\arrow[uu,xshift=1.1ex,"t^{2}_{1}"{right}]\arrow[uu,xshift=-1.1ex,"s^{2}_{1}"]
\arrow[rrr, yshift=1.1ex,"{(\nu^{1}_{0}\star^{1}_{0}1_{2(1)})\circ\nu^{1}_{0}}"]
\arrow[rrr, yshift=-1.1ex,"{(1_{2(1)}\star^{1}_{0}\nu^{1}_{0})\circ\nu^{1}_{0}}"{below}]&&&2(1)\star^{1}_{0}2(1)\star^{1}_{0}2(1)
\end{tikzcd}$$

thus by precomposing it with:

$$\begin{tikzcd}
2(1)\star^{1}_{0}2(1)\star^{1}_{0}2(1)\arrow[rrr,"{H_1\star^{1}_{0}\tau\star^{1}_{0}F_1}"]&&&h_1(1(1))\star^{1}_{0}\tau(1(0))\star^{1}_{0}f_1(1(1))
\end{tikzcd}$$

we get the operation $(BC)$:

$$\begin{tikzcd}
2(2)\arrow[rrr,"{(H_1\star^{1}_{0}\tau\star^{1}_{0}F_1)\circ a}"]&&&h_1(1(1))\star^{1}_{0}\tau(1(0))\star^{1}_{0}f_1(1(1))
\end{tikzcd}$$

\item Operation for $(CD)$: this is similar to the operation $(AB)$ and we obtain:

$$\begin{tikzcd}
2(2)\arrow[rrr,"{(\omega\star^{2}_{0}[F_1;F_1])\circ\nu^{2}_{0}}"]&&&\tau(1(0))\star^{1}_{0}f_1(1(1))\star^{1}_{0}f_1(1(1))
\end{tikzcd}$$

\item Operation $(DE)$: this is similar to the operation $(BC)$ and we obtain:

$$\begin{tikzcd}
2(2)\arrow[rrr,"{(\tau\star^{1}_{0}F_1\star^{1}_{0}F_1)\circ a}"]&&&\tau(1(0))\star^{1}_{0}f_1(1(1))\star^{1}_{0}f_1(1(1))
\end{tikzcd}$$

\item Operation $(EF)$:

the operations:

$$\begin{tikzcd}
f_1(1(1))\star^{1}_{0}f_1(1(1))\arrow[rr,"{F_1\star^{1}_{0}F_1}"]&&2(1)\star^{1}_{0}2(1)\arrow[r,"\nu^{1}_{0}"]&2(1),
\end{tikzcd}\qquad\begin{tikzcd}
f_1(1(1))\star^{1}_{0}f_1(1(1))\arrow[rr,"{f_1(\mu^{1}_{0})}"]&&f_1(1(1))\arrow[r,"F_1"]&2(1),
\end{tikzcd}$$

are parallels, thus lead to the operation $d_0$:

$$\begin{tikzcd}
2(2)\arrow[rrrdd,"d_0"]\\\\
2(1)\arrow[uu,xshift=1.1ex,"t^{2}_{1}"{right}]\arrow[uu,xshift=-1.1ex,"s^{2}_{1}"]
\arrow[rrr, yshift=1.1ex,"{(F_1\star^{1}_{0}F_1)\circ\nu^{1}_{0}}"]
\arrow[rrr, yshift=-1.1ex,"{f_1(\mu^{1}_{0})\circ\nu^{1}_{0}}"{below}]&&&2(1)\star^{1}_{0}2(1)\star^{1}_{0}2(1)
\end{tikzcd}$$

Also we have the lifting $[\tau;\tau]$:

$$\begin{tikzcd}
2(2)\arrow[rrrdd,"{[\tau;\tau]}"]\\\\
2(1)\arrow[uu,xshift=1.1ex,"t^{2}_{1}"{right}]\arrow[uu,xshift=-1.1ex,"s^{2}_{1}"]
\arrow[rrr, yshift=1.1ex,"\tau"]
\arrow[rrr, yshift=-1.1ex,"\tau"{below}]&&&\tau(1(0))
\end{tikzcd}$$

thus this leads to:

$$\begin{tikzcd}
2(2)\arrow[r,"\nu^{2}_{0}"]&2(2)\star^{2}_{0}2(2)\arrow[rr,"{[\tau;\tau]\star^{2}_{0}d_0}"]&&\tau(1(0))\star^{1}_{0}f_1(1(1))\star^{1}_{0}f_1(1(1))
\end{tikzcd}$$

thus to the operation $(EF)$:

$$\begin{tikzcd}
2(2)\arrow[rrr,"{([\tau;\tau]\star^{2}_{0}d_0)\circ\nu^{2}_{0}}"]&&&\tau(1(0))\star^{1}_{0}f_1(1(1))\star^{1}_{0}f_1(1(1))
\end{tikzcd}$$

\item Operation $(AG)$: this is similar to the operations $(BC)$ and $(DE)$ and we obtain:

$$\begin{tikzcd}
2(2)\arrow[rrr,"{(H_1\star^{1}_{0}H_1\star^{1}_{0}\tau)\circ a}"]&&&h_1(1(1))\star^{1}_{0}f_1(1(1))\star^{1}_{0}\tau(1(0))
\end{tikzcd}$$

\item Operation $(GH)$: this is similar to the operation $(EF)$ and we obtain:

$$\begin{tikzcd}
2(2)\arrow[rrr,"{(d_1\star^{2}_{0}[\tau;\tau])\circ\nu^{2}_{0}}"]&&&h_1(1(1))\star^{1}_{0}h_1(1(1))\star^{1}_{0}\tau(1(0))
\end{tikzcd}$$

\item Operation $(HF)$:

we use the operations:

$$\begin{tikzcd}
h_1(1(1))\star^{1}_{0}h_1(1(1))\star^{1}_{0}\tau(1(0))\arrow[rrr,"{(H_1\circ h_1(\mu^{1}_{0}))\star^{1}_{0}\tau}"]
&&&2(1)\star^{1}_{0}2(1)\arrow[r,"\nu^{1}_{0}"]&2(1)
\end{tikzcd}$$

and

$$\begin{tikzcd}
\tau(1(0))\star^{1}_{0}f_1(1(1))\star^{1}_{0}f_1(1(1))\arrow[rrr,"{\tau\star^{1}_{0}(F_1\circ f_1(\mu^{1}_{0}))}"]
&&&2(1)\star^{1}_{0}2(1)\arrow[r,"\nu^{1}_{0}"]&2(1)
\end{tikzcd}$$

also we have the equality $h_1(1(1))\star^{1}_{0}h_1(1(1))\star^{1}_{0}\tau(1(0))=\tau(1(0))\star^{1}_{0}f_1(1(1))\star^{1}_{0}f_1(1(1))$,
and these operations are parallels, thus we get the lifting $\omega(\mu^{1}_{0})$:

$$\begin{tikzcd}
2(2)\arrow[rrrrrddd,"{\omega(\mu^{1}_{0})}"]\\\\\\
2(1)\arrow[uuu,xshift=1.1ex,"t^{2}_{1}"{right}]\arrow[uuu,xshift=-1.1ex,"s^{2}_{1}"]
\arrow[rrrrr, yshift=1.1ex,"{\left((H_1\circ h_1(\mu^{1}_{0}))\star^{1}_{0}\tau\right)\circ\nu^{1}_{0}}"]
\arrow[rrrrr, yshift=-1.1ex,"{\left(\tau\star^{1}_{0}(F_1\circ f_1(\mu^{1}_{0}))\right)\circ\nu^{1}_{0}}"{below}]
&&&&&\tau(1(0))\star^{1}_{0}f_1(1(1))\star^{1}_{0}f_1(1(1))
\end{tikzcd}$$

\item We have the equalities:

\begin{align*}
h_1(1(1))\star^{1}_{0}h_1(1(1))\star^{1}_{0}\tau(1(0)) &= h_1(1(1))\star^{1}_{0}\tau(1(0))\star^{1}_{0}f_1(1(1)) \\
        &= \tau(1(0))\star^{1}_{0}f_1(1(1))\star^{1}_{0}f_1(1(1)) \\
\end{align*}

which shows that operations $(AB)$, $(BC)$, $(CD)$, $(DE)$, $(EF)$, $(AG)$, $(GH)$ and $(HF)$ have the same arities, say:

$$\tau(1(0))\star^{1}_{0}f_1(1(1))\star^{1}_{0}f_1(1(1))$$

\item With the operations $(AB)$, $(BC)$, $(CD)$, $(DE)$ and $(EF)$ we have the following morphism in $\Theta^{\infty}_{\mathbb{M}^2}$:

$$\begin{tikzcd}
2(2)\star^{2}_{1}2(2)\star^{2}_{1}2(2)\star^{2}_{1}2(2)\star^{2}_{1}2(2)
\arrow[dd,"{\left(([H_1;H_1]\star^{2}_{0}\omega)\circ\nu^{2}_{0}\right)
\star^{2}_{1}\left((H_1\star^{1}_{0}\tau\star^{1}_{0}F_1)\circ a\right)\star^{2}_{1}
\left((\omega\star^{2}_{0}[F_1;F_1])\circ\nu^{2}_{0}\right)\star^{2}_{1}\left((\tau\star^{1}_{0}F_1\star^{1}_{0}F_1)\circ a\right)
\star^{2}_{1}\left(([\tau;\tau]\star^{2}_{0}d_0)\circ\nu^{2}_{0}\right)}"]\\\\
\tau(1(0))\star^{1}_{0}f_1(1(1))\star^{1}_{0}f_1(1(1))
\end{tikzcd}$$

From it we get the following operation $(ABCDEF)$ in $\Theta^{\infty}_{\mathbb{M}^2}$:

$$\begin{tikzcd}
2(2)\arrow[d,"{\nu^{2}_{1}}"]\\
2(2)\star^{2}_{1}2(2)\arrow[d,"{1_{2(2)}\star^{2}_{1}\nu^{2}_{1}}"]\\
2(2)\star^{2}_{1}2(2)\star^{2}_{1}2(2)\arrow[d,"{1_{2(2)}\star^{2}_{1}1_{2(2)}\star^{2}_{1}\nu^{2}_{1}}"]\\
2(2)\star^{2}_{1}2(2)\star^{2}_{1}2(2)\star^{2}_{1}2(2)\arrow[d,"{1_{2(2)}\star^{2}_{1}1_{2(2)}\star^{2}_{1}1_{2(2)}\star^{2}_{1}\nu^{2}_{1}}"]\\
2(2)\star^{2}_{1}2(2)\star^{2}_{1}2(2)\star^{2}_{1}2(2)\star^{2}_{1}2(2)
\arrow[dd,"{\left(([H_1;H_1]\star^{2}_{0}\omega)\circ\nu^{2}_{0}\right)
\star^{2}_{1}\left((H_1\star^{1}_{0}\tau\star^{1}_{0}F_1)\circ a\right)\star^{2}_{1}
\left((\omega\star^{2}_{0}[F_1;F_1])\circ\nu^{2}_{0}\right)\star^{2}_{1}\left((\tau\star^{1}_{0}F_1\star^{1}_{0}F_1)\circ a\right)
\star^{2}_{1}\left(([\tau;\tau]\star^{2}_{0}d_0)\circ\nu^{2}_{0}\right)}"]\\\\
\tau(1(0))\star^{1}_{0}f_1(1(1))\star^{1}_{0}f_1(1(1))
\end{tikzcd}$$

\item With the operations $(AG)$, $(GH)$ and $(HF)$ we have the following morphism in $\Theta^{\infty}_{\mathbb{M}^2}$:

$$\begin{tikzcd}
2(2)\star^{2}_{1}2(2)\star^{2}_{1}2(2)
\arrow[dd,"{\left((H_1\star^{1}_{0}H_1\star^{1}_{0}\tau)\circ a\right)
\star^{2}_{1}\left((d_1\star^{2}_{0}[\tau;\tau])\circ\nu^{2}_{0}\right)\star^{2}_{1}
\left(\omega(\mu^{1}_{0})\right)}"]\\\\
\tau(1(0))\star^{1}_{0}f_1(1(1))\star^{1}_{0}f_1(1(1))
\end{tikzcd}$$

and from it we get the following operation $(AGHF)$ in $\Theta^{\infty}_{\mathbb{M}^2}$:

$$\begin{tikzcd}
2(2)\arrow[d,"{\nu^{2}_{1}}"]\\
2(2)\star^{2}_{1}2(2)\arrow[d,"{1_{2(2)}\star^{2}_{1}\nu^{2}_{1}}"]\\
2(2)\star^{2}_{1}2(2)\star^{2}_{1}2(2)
\arrow[dd,"{\left((H_1\star^{1}_{0}H_1\star^{1}_{0}\tau)\circ a\right)
\star^{2}_{1}\left((d_1\star^{2}_{0}[\tau;\tau])\circ\nu^{2}_{0}\right)\star^{2}_{1}
\left(\omega(\mu^{1}_{0})\right)}"]\\\\
\tau(1(0))\star^{1}_{0}f_1(1(1))\star^{1}_{0}f_1(1(1))
\end{tikzcd}$$

\end{itemize}

The operations $(ABCDEF)$ and $(AGHF)$ are parallels, thus lead to the lifting:

$$\begin{tikzcd}
2(3)\arrow[rrrrrddd,"{[(ABCDEF);(AGHF)]}"]\\\\\\
2(2)\arrow[uuu,xshift=1.1ex,"t^{3}_{2}"{right}]\arrow[uuu,xshift=-1.1ex,"s^{3}_{2}"]
\arrow[rrrrr, yshift=1.1ex,"{(ABCDEF)}"]
\arrow[rrrrr, yshift=-1.1ex,"{(AGHF)}"{below}]
&&&&&\tau(1(0))\star^{1}_{0}f_1(1(1))\star^{1}_{0}f_1(1(1))
\end{tikzcd}$$

which is a $3$-operation in $\Theta^{\infty}_{\mathbb{M}^2}$. But $\tau$ has dimension $2$, thus the $\E$-realization of 
the operations $(ABCDEF)$ and $(AGHF)$ agree, which gives the commutative diagram
for pseudo-$2$-natural transformations.
\end{proof}

\section{The globular weak $\infty$-category of globular weak $\infty$-categories in the sense of Grothendieck}

Thanks to the result in \cite{bourke-injectif} it is possible to use the formalism of globular operads in \cite{batanin-main},
at least for the globular free contractible operad $\mathbb{B}^{0}_{\text{C}}$ which algebras are globular weak $\infty$-categories of
Batanin. The main result in \cite{bourke-injectif} solves a conjecture in \cite{ara-these} which basically claimed 
the equivalence of categories: 

$$\mathbb{M}\text{od}(\Theta^{\infty}_{\mathbb{M}^0})\simeq\mathbb{B}^{0}_{\text{C}}\text{-}\mathbb{A}\text{lg}$$

that is, the approach of globular weak $\infty$-categories of Batanin and Grothendieck are equivalent. 

In this article we built the following coglobular object in $\Cat$:

$$\begin{tikzcd}
\Theta^{\infty}_{\mathbb{M}^0}
  \arrow[rr, yshift=1.5ex,"s^{0s^{1}_{0}}_{1}"]\arrow[rr, yshift=-1.5ex,"t^{1}_{0}"{below}]
 &&\Theta^{\infty}_{\mathbb{M}^1}
 \arrow[rr, yshift=1.5ex,"s^{2}_{1}"]\arrow[rr, yshift=-1.5ex,"t^{2}_{1}"{below}]  
&&\Theta^{\infty}_{\mathbb{M}^2}
\arrow[rr, yshift=1.5ex,"s^{3}_{2}"]\arrow[rr, yshift=-1.5ex,"t^{3}_{2}"{below}]
&&\Theta^{\infty}_{\mathbb{M}^3}
\arrow[rr, yshift=1.5ex,"s^{4}_{3}"]\arrow[rr, yshift=-1.5ex,"t^{4}_{3}"{below}]
&&\Theta^{\infty}_{\mathbb{M}^4}\cdots\Theta^{\infty}_{\mathbb{M}^{n-1}}
\arrow[rr, yshift=1.5ex,"s^{n}_{n-1}"]\arrow[rr, yshift=-1.5ex,"t^{n}_{n-1}"{below}]
&&\Theta^{\infty}_{\mathbb{M}^n}\cdots 
\end{tikzcd}$$

If this coglobular object $\Theta^{\infty}_{\mathbb{M}^{\bullet}}$ is a $\mathbb{B}^{0}_{\text{C}}$-coalgebra which means that we have a 
coaction:

$$\begin{tikzcd}
\mathbb{B}^{0}_{\text{C}}\arrow[rr]&&\mathbb{C}\text{oend}(\Theta^{\infty}_{\mathbb{M}^{\bullet}})
\end{tikzcd}$$

where $\mathbb{C}\text{oend}(\Theta^{\infty}_{\mathbb{M}^{\bullet}})$ is the coendomorphism operad
associated to $\Theta^{\infty}_{\mathbb{M}^{\bullet}}$ (this exists because $\Cat$ has all small colimits),
then this shows that globular weak $(n,\infty)$-transformations, for all $n\in\mathbb{N}$, by coherators,
organize in a globular weak $\infty$-categories in the sense of Batanin, thus in the sense of Grothendieck. 
Thus the $\mathbb{B}^{0}_{\text{C}}$-coalgebraicity of $\Theta^{\infty}_{\mathbb{M}^{\bullet}}$ means
that globular weak $\infty$-categories of Grothendieck organize in a globular weak $\infty$-categories
of Grothendieck. This fact is a necessary step for an accurate approach of globular weak $\infty$-stacks
as wished by Alexandre Grothendieck in \cite{grothendieck}.

A way to prove this coalgebraicity is to show that the operad $\mathbb{C}\text{oend}(\Theta^{\infty}_{\mathbb{M}^{\bullet}})$
is contractible and equipped with a composition system. We shall give a complete syntactical proof of this coalgebraicity in
a future paper.

\bigbreak{}
\begin{minipage}{1.0\linewidth}
Laboratoire de Math\'ematiques d'Orsay, UMR 8628\\
Universit\'e de Paris-Saclay and CNRS\\
B\^atiment 307, Facult\'e des Sciences d'Orsay\\
94015 ORSAY Cedex, FRANCE\\
\href{mailto:camell.kachour@universite-paris-saclay.fr}{\url{camell.kachour@universite-paris-saclay.fr}}
\end{minipage}
\end{document}